\title[BMM Symmetrising Trace Conjecture]{The BMM Symmetrising Trace Conjecture for Families of Complex Reflection Groups of Rank Two}
\author{Eirini Chavli}
\address{E.C.: Institute of Algebra and Number Theory, University of Stuttgart, Stuttgart, Germany}
\email{eirini.chavli@mathematik.uni-stuttgart.de}
\author{G\"otz Pfeiffer}
\address{G.P.: School of Mathematical and Statistical Sciences,
	University of Ireland, Galway, Ireland}
\email{goetz.pfeiffer@universityofgalway.ie}
\subjclass[2020]{Primary 20C08; Secondary 20F55}
\tikzset{every node/.style={circle,draw=none,minimum size=4mm,inner sep=0pt}}
\tikzset{r/.style={magenta,->,>=latex}}%
\tikzset{b/.style={cyan,->,>=latex}}%
\tikzset{g/.style={olive,->,>=latex,very thin}}%
\newcommand{\Span}[1]{\langle #1 \rangle}
\tikzset{every node/.style={circle,draw=none,minimum size=4mm,inner sep=0pt}}
\tikzset{r/.style={red,->,>=latex}}%
\tikzset{b/.style={blue,->,>=latex}}%
\numberwithin{equation}{section}
\newtheorem{Theorem}{Theorem}[section]
\newtheorem{Proposition}[Theorem]{Proposition}
\newtheorem{conj}[Theorem]{Conjecture}
\newtheorem{rem}[Theorem]{Remark}
\newtheorem{ex}[Theorem]{Example}
\newtheorem{lem}[Theorem]{Lemma}
\newtheorem{cor}[Theorem]{Corollary}
\newtheorem{defi}[Theorem]{Definition}
\begin{document}
	\begin{abstract}
		The exceptional complex reflection groups of
		rank 2 are partitioned into three families.
		We construct explicit matrix models for the Hecke algebras associated to the maximal groups in the tetrahedral and octahedral family,
		and use them to verify the BMM symmetrising trace conjecture for all
		groups in these two families, providing evidence that
		a similar strategy might apply for the icosahedral
		family.

	\end{abstract}

	\maketitle

\section{Introduction}
Iwahori--Hecke algebras associated to Weyl groups arise as
endomorphism rings of certain representations of finite reductive
groups and they are used to study their representation theory. The
structure of these algebras is determined in terms of generators and
relations, and it turns out to be a deformation of the group algebra
of the corresponding Weyl group.  More generally, the Coxeter
presentation of any real reflection group can be deformed in this way
to define an Iwahori--Hecke algebra.  In fact, one can further
generalise this construction to all complex reflection groups.

In 1993 in a conference on a Greek island called Spetses, Brou\'e,
Malle and Michel \cite{BMM} initiated the study of some objects
attached to complex reflection groups, which they named \emph{Spetses}
after the island. Considering these objects, complex reflection groups
play for Spetses the role Weyl groups play for finite reductive
groups.  In 1998, motivated by the definition of Iwahori-Hecke
algebras for real reflection groups, Brou\'e, Malle and Rouquier
\cite{BMR} associated to every complex reflection group a generic
Hecke algebra.

Inspired by the observation that many examples of generic Hecke
algebras share the nice properties of Iwahori--Hecke algebras,
Brou\'e, Malle, Michel and Rouquier stated a list of conjectures.
Among them there are the following two fundamental conjectures, which
are extremely important and often assumed as hypothesis, in the study
of the structure and representation theory of generic Hecke algebras,
as well as of other related structures such as Cherednik algebras:
\begin{itemize}
\item the ``BMR freeness conjecture'' \cite{BMR} states that the
  generic Hecke algebra is a free module of rank equal to the order of
  the associated complex reflection group;
\item the ``BMM symmetrising trace conjecture'' \cite{BMM} states that
  the Hecke algebra admits a \emph{canonical} symmetrising trace.
\end{itemize}

Complex reflection groups have been classified by Shephard and
Todd~\cite{ShTo} as an infinite family of groups called $G(de, e, n)$,
and a family of $34$ exceptional groups called $G_4, \dots, G_{37}$.
This classification allows us to try and verify the conjectures
case-by-case.

The BMR freeness conjecture was a known fact for the groups
$G(de, e, n)$. Between 2012 and 2017, there was outstanding
developments regarding this conjecture, which is now proved for all
the exceptional complex reflection groups. A detailed list of the
state-of-the art of the proof can be found in \cite[Theorem
3.5]{BCCK}.

The second conjecture was also known to hold for the groups
$G(de,e,n)$ \cite{BreMa, MM98}. However, there had been until 2018 a
little progress in the proof of this conjecture for the exceptional
complex reflection groups; the conjecture was established in
\cite{MM10} only for the groups $G_4$, $G_{12}$, $G_{22}$ and $G_{24}$
(the case of $G_4$ was later independently checked in \cite{Mar46}).

In order to understand the difficulties for the verification of the
BMM symmetrising trace conjecture, we give the definition of a
canonical symmetrising trace. Let $W$ be a complex reflection group
and let $H(W)$ be the associated generic Hecke algebra, defined over a
Laurent polynomial ring $R(W)$.  A \emph{symmetrising trace} on $H(W)$ is a
linear map $\tau : H(W) \rightarrow R(W)$ such that
$\tau(hh') = \tau(h'h)$ for all $h, h' \in H(W)$ and the bilinear form
$(h, h') \mapsto \tau(hh')$ is non-degenerate.  The symmetrising trace
$\tau$ is \emph{canonical}, when it satisfies two further conditions
\cite[2.1]{BMM}, which imply that $\tau$ is unique.

Let $\mathcal{B}$ be a basis for $H(W)$ as an $R(W)$-module.  Then a
linear map $\tau \colon H(W) \rightarrow R(W)$ is a symmetrising trace
if the Gram matrix $A:=(\tau(bb'))_{b,b' \in \mathcal{B}}$ is symmetric and
its determinant is a unit in $R(W)$.  If $1_{H(W)} \in \mathcal{B}$
then we can define a linear map $\tau_{\mathcal{B}}\colon H(W) \rightarrow R(W)$ by
setting $\tau_{\mathcal{B}}(1_{H(W)}) = 1$ and $\tau_{\mathcal{B}}(b) = 0$ for $b \neq 1_{H(W)}$.
In the real case, with respect to the \emph{standard basis} of $H(W)$, this
map $\tau_{\mathcal{B}}$ is the canonical symmetrising trace.

In the complex case, the proof of the BMR freeness conjecture includes
the construction of bases. More precisely, the first author has
constructed in \cite{Ch17, Ch18} explicit bases for the generic Hecke
algebras associated with the groups $G_4,\ldots,G_{16}$, which are of
a ``nice'' form: they have an inductive nature involving powers of a
central element $z$ and they also have a \emph{parabolic
  structure}. Moreover, $1_{H(W)}$ is always a basis element. We call
such a basis a \emph{$z$-basis}.

In 2018, the first author together with Boura, Chlouveraki and
Karvounis \cite{BCC, BCCK} used $z$-bases $\mathcal{B}$ for the cases
of $G_4$, \ldots, $G_8$, and $G_{13}$.  The corresponding linear map
$\tau_{\mathcal{B}}$ sends any element $h \in H(W)$ to the coefficient
of $1_{H(W)}$ when $h$ is expressed as an $R(W)$-linear combination of
the elements of $\mathcal{B}$.  In order to find the Gram matrix $A$,
we therefore need to express products of the form $b b'$,
$b, b' \in \mathcal{B}$, as linear combinations of the basis
$\mathcal{B}$.  Even if this problem is a standard task of linear
algebra, its solution can be quite challenging when the coefficients
are multivariate polynomials in many dimensions.
The first author together with Boura, Chlouveraki and Karvounis managed to overcome these difficulties, by using the nice form of the $z$-basis
to reduce the task of finding $A$ to the computation of only a few products
$bb'$ with a program written in the language \texttt{C++}.

The cases $G_4$, \ldots, $G_8$, and $G_{13}$ did involve a large amount of
ad hoc calculations made by hand, which seemed unfeasible in other
cases.  This is due to a number of factors like the size of the group
$W$, the size of its centre, and the number of indeterminates that
appear in the presentation of the generic algebra $H(W)$.

In 2021 in our paper \cite{ChP}, we described the centre of the
generic Hecke algebra associated to the exceptional groups for which
we knew the validity of both conjectures, namely the groups
$G_4$,\ldots, $G_8$, $G_{12}$, $G_{13}$, and $G_{22}$.  Here, for each
group $W$, we fix a parabolic subgroup $W'$ and construct a $z$-basis
from a choice of coset representatives of $W'$ in~$W$.  Relative to
this basis we then obtain a faithful representation of $H(W)$, where
each generator is represented by a matrix.  This approach allowed us
to create a computer program in GAP3, with all calculations being
automated. As a result, we can express any element of $H(W)$ as a
linear combination of a given, and hence every, basis. Using this
program, we recover for these cases the validity of the BMM
symmetrising trace conjecture.

For larger groups, the following problems arise: Firstly, not every
choice of coset representatives yields a $z$-basis, and it is not at
all obvious how to make the right choice.  And secondly, even if we
have found such a $z$-basis $\mathcal{B}$, the trace function
$\tau_{\mathcal{B}}$ is not necessarily symmetric.
In this paper, we overcome these difficulties and we prove
the following:
\begin{Theorem}
  The BMM symmetrising trace conjecture holds for the exceptional  complex reflection groups $G_4, \dots, G_{15}$.
\end{Theorem}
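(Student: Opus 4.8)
The plan is to reduce all twelve groups to the two maximal ones, $G_7$ in the tetrahedral family and $G_{11}$ in the octahedral family, and to treat each of these by explicit computation inside a faithful matrix model of its generic Hecke algebra. For a maximal group $W$ I would fix a parabolic subgroup $W'$, choose a set of coset representatives of $W'$ in $W$, and assemble a candidate $z$-basis $\mathcal{B}$ from these representatives together with powers of the central element $z$, in the inductive, parabolic shape of the bases of \cite{Ch17, Ch18} used in \cite{ChP}. Representing each generator of $H(W)$ by its matrix of left multiplication relative to $\mathcal{B}$ yields the matrix model; once it is available, every product $b b'$ of basis elements can be expanded in $\mathcal{B}$, so that all structure constants of $H(W)$, and hence the Gram matrix of any candidate trace, become computable.

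The first thing to settle is that $\mathcal{B}$ really is a basis, which is the first of the two difficulties flagged in the introduction, since an arbitrary system of coset representatives need not be $R(W)$-linearly independent. I would resolve this by testing candidate systems directly against the regular representation --- linear independence of the corresponding $|W|$ matrices is a finite determinant check --- and by using the chain of parabolic subalgebras to restrict attention to systems compatible with $H(W')$, thereby cutting the search down to a manageable list of candidate $z$-bases.

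The heart of the argument is the trace itself, where the second difficulty appears: the naive form $\tau_{\mathcal{B}}$, sending $h$ to its coefficient of $1_{H(W)}$ relative to $\mathcal{B}$, need not be symmetric and so need not be a trace at all. My plan is therefore to search, among the admissible $z$-bases found above, for one whose Gram matrix $A = (\tau_{\mathcal{B}}(b b'))_{b, b' \in \mathcal{B}}$ is symmetric; symmetry of $A$ says exactly that $\tau_{\mathcal{B}}(b b') = \tau_{\mathcal{B}}(b' b)$ on all basis pairs, hence that $\tau_{\mathcal{B}}$ is a trace. For such a basis I would then check that $\det A$ is a unit in $R(W)$, giving non-degeneracy, and confirm the two conditions of \cite[2.1]{BMM} that pin down the canonical trace and force its uniqueness. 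All of this is a finite computation with the matrix model and can be automated in GAP3, as in \cite{ChP}.

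Finally, I would transfer the conclusion from each maximal group to the other members of its family. The generic Hecke algebras of the groups in a single family are tied together through the central element $z$, and I would exhibit each non-maximal algebra as a specialisation of the maximal one under which a chosen $z$-basis and the form $\tau_{\mathcal{B}}$ are preserved, so that symmetry of $A$ and the unit value of $\det A$ descend from $H(G_7)$ and $H(G_{11})$ to all of $G_4, \dots, G_{15}$. I expect the genuine obstacle to be the second difficulty in the case of $G_{11}$: because its centre is large and its Hecke algebra carries many indeterminates, one must single out a $z$-basis for which $\tau_{\mathcal{B}}$ is symmetric from a large space of candidates, and then certify both symmetry and a unit determinant for a sizeable Gram matrix whose entries are multivariate Laurent polynomials.
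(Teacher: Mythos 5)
Your toolkit (spanning-tree coset representatives, a faithful matrix model obtained from a coset table, symmetry of the Gram matrix, unit determinant) matches the paper's, but your reduction strategy has a genuine gap: the transfer from the maximal groups to the other members of each family does not work as you describe. First, $H(G_j)$ for a non-maximal $G_j$ is \emph{not} a specialisation of $H(G_7)$ or $H(G_{11})$: any specialisation of the maximal algebra is free of rank $|W_{\max}|$ over the target ring, while $H(G_j)$ has rank $|G_j| < |W_{\max}|$, so the two can never be identified. The correct relationship, due to Malle \cite{Ma2}, is that $H(G_j)$ embeds as a \emph{subalgebra} of the specialised algebra $H^j(W_{\max}) = H(W_{\max}) \otimes_{\theta_j} R(G_j)$, which is free of rank $\ell_j = |W_{\max} : G_j|$ over $H(G_j)$. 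To descend anything through this embedding you must know that the chosen basis of $H(W_{\max})$, after specialisation, splits into $\ell_j$ central translates of a subset which is itself a basis of $H(G_j)$ — and establishing this is exactly the per-group work (factorizations, spanning trees, coset tables proving the spanning property) that the paper carries out for every $G_j$ separately; it is not a consequence of the maximal-group computation. Indeed the paper uses this descent only in the reverse of your logical order: as an optional shortcut for the determinant, \emph{after} all twelve bases have been constructed and proved to be bases, and even then symmetry and Condition (3) are verified group by group.

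Second, even granting the subalgebra structure, the "preserved basis" step fails concretely for half of the octahedral family: the parabolic subgroup $\langle u \rangle$ of $G_{11}$ meets $G_{12}$, $G_{13}$, $G_{14}$, $G_{15}$ trivially (the generator $u$ does not lie in these groups), so no $z$-basis of $H(G_{11})$ built on this parabolic can restrict to a $z$-basis of their Hecke algebras. The paper must switch to different parabolic subgroups ($\langle u^2\rangle$ for $G_{13}$ and $G_{15}$, $\langle t \rangle$ for $G_{14}$, $\langle s \rangle$ for $G_{12}$) and, for $G_{12}$ and $G_{14}$, choose factorizations independently of $G_{11}$ altogether. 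Finally, your transfer addresses only symmetry and the determinant, i.e.\ Condition (1) of the conjecture. Condition (2) requires the lifting property, which rests on actually having a $z$-basis for each $H(G_j)$, and Condition (3) — checked in the paper in its equivalent form \eqref{extra2}, involving the element $\boldsymbol{\pi}$ of each group's own braid group — is a further computation the paper performs for every $G_j$ with its own coset table. So the computations you hope to confine to $G_7$ and $G_{11}$ are in fact unavoidable for all twelve groups; the maximal groups serve as a guide for choosing compatible bases, not as a substitute for the per-group verification.
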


The aforementioned groups are complex reflection groups of rank $2$.
For each
exceptional group $W$ of rank $2$, its central quotient $W/Z(W)$ is
the rotation group of a platonic solid, yielding three disjoint
families:
\begin{itemize}
\item The \emph{tetrahedral} family, where $W/Z(W)$ is the alternating
  group $\mathfrak{A}_4$, consists of the exceptional groups $G_4$, \ldots, $G_7$.
\item The \emph{octahedral} family, where $W/Z(W)$ is the symmetric
  group $\mathfrak{S}_4$, consists of the exceptional groups $G_{8}$, \ldots, $G_{15}$.
\item The remaining groups $G_{16}$, \ldots, $G_{22}$ form the
  \emph{icosahedral} family, where $W/Z(W)$ is the alternating group
  $\mathfrak{A}_5$.
\end{itemize}

As mentioned before, the BMM symmetrising trace conjecture is known to
be true for the tetrahedral family.  In this paper we introduce a new
strategy to verify this conjecture for the octahedral family, and to
recover the results for the tetrahedral family, having as a goal to
introduce a uniform method applying to all exceptional groups of rank
2, including the icosahedral family.

For this purpose, we take advantage of the following facts: firstly, each family has a maximal group $W_{\max}$  (these are the groups $G_7$,$G_{11}$, and $G_{19}$ for each family, respectively), and each group $W$ in the family is a normal reflection
subgroup of $W_{\max}$. Moreover, if $Z(W_{\max})=\Span{z}$, then $Z(W)=\Span{z^{\ell}}$, where $\ell$ is the index of $W$ in $W_{\max}$. 

We now consider the groups of the tetrahedral and octahedral family.
For each group in the family (including the maximal group), we use a factorization 
$$
	W= Z(W) W' X= \{yvx\,:\,y\in Z(W),\,v\in W',\,x\in X\}
$$
of
$W$, where $W'$ is a parabolic subgroup of $W$ (with $W' \cap Z(W) = 1$),
and $Z(W)X$ is a set of representatives of the right cosets of $W'$ in $W$. The set $X$ is the vertex set of a spanning tree with edges labelled by generators of~$W$. 

 In the tetrahedral family, we use the same parabolic subgroup $W'$ for all groups and we choose a set $X$ for $G_6$, then we lift to obtain such a set for $G_7$, from where we restrict it to obtain such sets for $G_4$ and $G_5$.
 
 In the octahedral family we use the same parabolic subgroup for the groups $G_8$, $G_9$, $G_{10}$, and $G_{11}$ and we choose a set $X$ for $G_9$, then we lift to obtain such a set for $G_{11}$, from where we restrict it to obtain such sets for $G_{8}$ and $G_{10}$. For the rest of the groups in this family, we choose a factorization independently.
 
 Now, as far as the Hecke algebras are
concerned,  we use the aforementioned  factorization as an inspiration to construct a $z$-basis $\mathcal{B}$ for each $H(W)$. In order to prove that this is indeed a basis, we use a \emph{coset table}, similar to the one we have used in \cite{ChP}, which allows us to work with a faithful matrix representation of $H(W)$,
rather than its presentation. Hence, we are able to
automate calculations inside the Hecke algebra. This particular construction of  $z$-basis and the automated calculations allow us to verify the validity of the BMM symmetrising trace conjecture, with respect to $\tau_{\mathcal{B}}$. 

We are optimistic that one could  find suitable $z$-bases that would allow for a similar strategy to work in
the icosahedral family.

Programs with computer calculations for our result can be found at the project's webpage \cite{pr}.
\\

\noindent
\textbf{Acknowledgments.} Research supported by the Hellenic Foundation for Research and Innovation (H.F.R.I.) under the Basic Research Financing (Horizontal support for all Sciences), National Recovery and Resilience Plan (Greece 2.0), Project Number: 15659, Project Acronym: SYMATRAL.

\section{Preliminaries}
\subsection{Complex reflection groups}
A \emph{pseudo-reflection} is a non-trivial
element of $\mathrm{GL}_n(\mathbb{C})$ of finite order,  whose fixed points in $\mathbb{C}^n$ form a hyperplane. A pseudo-reflection $s$ is called \emph{distinguished} if its only non-trivial eigenvalue on $\mathbb{C}^n$ equals $\exp(-2\pi \sqrt{-1}/e_s)$, where  $e_s$ denotes the order of $s$.

A complex reflection group $W$  is a finite subgroup of $\mathrm{GL}_n(\mathbb{C})$ generated by \emph{pseudo-reflections}. By a change of basis one
can always assume that $W$ is a finite subgroup of $\mathrm{GL}_n(K)$, where $K$ denotes the \emph{field of definition} of $W$, that is a subfield of $\mathbb{C}$ generated by the traces  of all the elements of $W$.

 Real reflection groups, also known as \emph{finite Coxeter groups} are particular cases of complex reflection groups. Therefore, complex reflection groups include interesting examples of groups, such as the Weyl groups and the dihedral groups.

A complex reflection group $W$ is \emph{irreducible} if it acts irreducibly on $\mathbb{C}^n$. In this case, we call $n$ the  \emph{rank} of $W$. Irreducible complex reflection groups are very important, since
every complex reflection group can be written as a  direct product of irreducible ones. The classification of irreducible complex reflection groups is thanks to  Shephard and Todd \cite{ShTo} and it is given by the following theorem:

\begin{Theorem}\label{ShToClas} Let $W \subset \mathrm{GL}_n(\mathbb{C})$ be an irreducible complex
	reflection group. Then, up to conjugacy, $W$ belongs to precisely one of the following classes:
	\begin{itemize}
		\item The symmetric group $S_{n+1}$.
		\item The infinite family $G(de,e,n)$, where  $d,e,n \in \mathbb{N}^*$, such that  $(de,e,n)\neq (1,1,n)$ and $(de,e,n)\neq(2,2,2)$. In this family we encounter all
		$n\times n$ monomial matrices whose non-zero entries are ${de}$-th roots of unity, while the product of all non-zero
		entries is a $d$-th root of unity.
		\smallbreak
		\item The 34 exceptional groups labelled
		$G_4, \, G_5, \, \dots,\, G_{37}$ (ordered with respect to increasing rank).
	\end{itemize}
\end{Theorem}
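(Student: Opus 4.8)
The plan is to split the analysis according to whether $W$ acts primitively or imprimitively on $V=\mathbb{C}^n$, to treat the imprimitive case by a direct structural analysis that extracts the monomial family $G(de,e,n)$, and to dispose of the primitive case by bounding the rank and enumerating the finitely many surviving groups. Recall that $W$ acts imprimitively if $V$ admits a nontrivial decomposition $V=V_1\oplus\cdots\oplus V_m$ with $m\ge 2$ permuted by $W$, and primitively otherwise.

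In the imprimitive case, irreducibility forces the induced action on blocks to be transitive and all $V_i$ to have equal dimension. A short argument exploiting that $W$ is generated by pseudo-reflections shows the blocks may be taken one-dimensional, so that $W$ consists of monomial matrices. Each distinguished reflection then either rescales a single coordinate by a root of unity or interchanges two coordinates up to a root of unity. Recording the group $A$ of diagonal matrices in $W$ together with the permutation group it induces, one reads off parameters $d,e,n$ and identifies $W$ with $G(de,e,n)$; the degenerate pairs $(de,e,n)=(1,1,n)$ and $(2,2,2)$ are excluded precisely because they fail to produce an irreducible group not already listed as a symmetric group. This simultaneously accounts for $S_{n+1}$ in its reflection representation.

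For the primitive case the leverage is invariant theory. By the Chevalley--Shephard--Todd theorem the invariant ring is a polynomial algebra with homogeneous generators of degrees $d_1\le\cdots\le d_n$ satisfying $\prod_i d_i=|W|$ and $\sum_i (d_i-1)=N$, where $N$ is the number of reflections. Combining these numerical constraints with the classification of finite primitive subgroups of $\mathrm{GL}_n(\mathbb{C})$ shows that primitivity together with reflection-generation can occur only for finitely many groups, confined to ranks $n\le 8$; enumerating these in each rank and checking the generation condition yields the list $G_4,\dots,G_{37}$. The rank-two primitive groups, which are the ones relevant here, admit a more transparent treatment: passing to the central quotient gives an embedding $\overline{W}=W/Z(W)\hookrightarrow\mathrm{PGL}_2(\mathbb{C})$, whose finite subgroups are cyclic, dihedral, or one of the rotation groups $\mathfrak{A}_4,\mathfrak{S}_4,\mathfrak{A}_5$ of the Platonic solids. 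The cyclic and dihedral quotients recover the imprimitive groups $G(de,e,2)$, while $\overline{W}\in\{\mathfrak{A}_4,\mathfrak{S}_4,\mathfrak{A}_5\}$ produces the tetrahedral, octahedral and icosahedral families $G_4,\dots,G_{22}$, obtained by lifting through the associated binary polyhedral groups and determining all scalar central extensions generated by reflections.

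I expect the main obstacle to be the primitive case in ranks $n\ge 3$: bounding the rank and completing the finite enumeration genuinely rests on the classification of finite primitive linear groups, a deep and lengthy body of work, rather than on a single clean argument. By contrast, the imprimitive case is an essentially self-contained computation, and the rank-two primitive case is controlled by the elementary classification of finite subgroups of $\mathrm{PGL}_2(\mathbb{C})$.
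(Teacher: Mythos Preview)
The paper does not prove this theorem at all: it is stated as the classical Shephard--Todd classification and attributed to~\cite{ShTo}, with no argument given. There is therefore no ``paper's proof'' to compare your proposal against.

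That said, your outline is the standard modern approach (essentially the one in Cohen's work and in Lehrer--Taylor~\cite{lehrer}): split into imprimitive versus primitive, extract the monomial groups $G(de,e,n)$ in the imprimitive case, and handle the primitive case via invariant-theoretic bounds and the classification of finite primitive linear groups, with the rank-two case reduced to finite subgroups of $\mathrm{PGL}_2(\mathbb{C})$. Your self-assessment is accurate: the imprimitive case and the rank-two primitive case are essentially self-contained, while the primitive case in ranks $3\le n\le 8$ genuinely rests on the deep enumeration of primitive linear groups and is not something one can fill in by hand in a short proof. One small inaccuracy: the exclusion $(de,e,n)=(2,2,2)$ is not because $G(2,2,2)$ coincides with a symmetric group, but because it is reducible (it is $W(A_1)\times W(A_1)$); only $(1,1,n)$ is excluded to avoid duplicating $S_n$.
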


 Among the exceptional groups we encounter the \emph{exceptional groups of rank 2}, which are the groups $G_4,\dots, G_{22}$. It is known that the quotient of each of these groups by its centre is either
 the tetrahedral, octahedral or icosahedral group (for more details one may refer to Chapter 6 of \cite{lehrer}). Therefore, the exceptional groups of rank 2 are divided into three families: the first family, known as \emph{the tetrahedral family}, includes the groups $G_4,\dots, G_7$, the second one, known as the \emph{octahedral family} includes the groups $G_8,\dots, G_{15}$ and the last one, known as the \emph{icosahedral family}, includes the rest of them. In each family, there is a maximal group of order $|W/Z(W)|^2$ (these are the groups $G_7$, $G_{11}$ and $G_{19}$, respectively for each family) and the rest of the groups in this family can be seen as subgroups of the maximal group. In Table \ref{t1} one can see a presentation of the maximal group for each family.
 \begin{table}[!ht]
 	\begin{center}
 		\small
 		 \caption{
 			\bf{The exceptional groups of rank $2$}}
 		\label{t1}
 		\scalebox{0.9}
 		{\begin{tabular}{|c|c|c|}
 				\hline
 				$W/Z(W)$& $W$& Maximal Group \\
 				\hline
 				$\begin{array}[t]{lcl} \\\text{Tetrahedral Group }
 					\mathcal{T}\simeq \mathfrak{A}_4\\ \\
 					\langle s,t,u\;|\;s^2=t^3=u^3=1, stu=1\rangle\end{array}$
 				&
 				$\begin{array}[t]{lcl}\\G_4,\dots,G_7\end{array}$
 				&
 				$\begin{array}[t]{lcl}\\ G_7=
 					\langle s,t,u\;|\;s^2=t^3=u^3=1, stu \text{ is central }\rangle
 				\end{array}$\\
 				\hline \hline

 				$\begin{array}[t]{lcl} \\\text{Octahedral Group }
 					\mathcal{O}\simeq \mathfrak{S}_4\\\\
 					\langle s,t,u\;|\;s^2=t^3=u^4=1, stu=1\rangle\end{array}$
 				&$\begin{array}[t]{lcl}\\
 					G_8,\dots,G_{15}
 				\end{array}$&
 				$\begin{array}[t]{lcl}\\ G_{11}=
 					\langle s,t,u\;|\;s^2=t^3=u^4=1, stu \text{ is central }\rangle
 				\end{array}$\\
 				\hline \hline

 				$\begin{array}[t]{lcl} \\\text{Icosahedral Group }
 					\mathcal{I}\simeq \mathfrak{A}_5\\\\
 					\langle s,t,u\;|\;s^2=t^3=u^5=1, stu=1\rangle\end{array}$
 				&$\begin{array}[t]{lcl}\\
 					G_{16},\dots,G_{22}
 				\end{array}$&
 				$\begin{array}[t]{lcl}\\ G_{19}=
 					\langle s,t,u\;|\;s^2=t^3=u^5=1, stu \text{ is central }\rangle
 				\end{array}$\\
 				\hline
 		\end{tabular}}
 	\end{center} 
 	\end{table}

\subsection{Hecke algebras}
Let $W$ be a complex reflection group. Brou\'e, Malle and Rouquier \cite[\S 2 B]{BMR} associated to $W$ a \emph{complex braid group} $B(W)$. Complex braid groups generalize the definition of \emph{Artin-Tits groups} \cite{Tits}, which were classically associated to real reflection groups.

Let $s\in W$ be a pseudo-reflection. Brou\'e, Malle and Rouquier
\cite[\S 2 B]{BMR} associated to $s$ an element $\mathbf{s}\in B(W)$,
which they called \emph{braided reflection}.
The
following result is \cite[Theorem 0.1]{be}:

\begin{Theorem}\label{pre}
	Let $W$ be a complex reflection group with associated complex braid group $B(W)$. There exists a finite subset $\mathbf{S}=\{\mathbf{s}_1,\dots, \mathbf{s}_n\}$ of $B(W)$, such that:
	\begin{itemize}
		\item[(i)] The elements $\mathbf{s}_1, \dots, \mathbf{s}_n$ are braided reflections associated to distinguished pseudo-reflections $s_1,\dots, s_n$ in $W$.
		\item[(ii)]The set  $\mathbf{S}$ generates $B(W)$ and the set $S:=\{s_1,\dots, s_n\}$ generates $W$.
		\item [(iii)] There exists a set $I$ of \emph{braid relations}, which are of the form $\mathbf{w_1}=\mathbf{w_2}$, where $\mathbf{w_1}$ and $\mathbf{w_2}$ are positive words of equal length in the elements of $\mathbf{S}$, such that $\langle \mathbf{S}\;|\;I\rangle$ is a presentation of $B(W)$.
		\item[(iv)] Viewing now $I$ as a set of relations in $S$, the group
		$W$ is presented by $$\langle S \;|\;I\;; \forall s\in S ,\, s^{e_s}=1\rangle$$.
	\end{itemize}
\end{Theorem}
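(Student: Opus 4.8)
The statement is a deep structural result about complex braid groups, so a uniform proof must pass through the topology of the reflection arrangement rather than through any presentation of $W$. The plan is to work with the topological definition of $B(W)$. Write $V \cong \mathbb{C}^n$ for the reflection representation, let $\mathcal{A}$ be the set of reflecting hyperplanes, and set $V^{\mathrm{reg}} = V \setminus \bigcup_{H \in \mathcal{A}} H$. Then $W$ acts freely on $V^{\mathrm{reg}}$, and one defines $B(W) = \pi_1(V^{\mathrm{reg}}/W, x_0)$ for a basepoint $x_0$, with the pure braid group $P(W) = \pi_1(V^{\mathrm{reg}}, \tilde{x}_0)$ sitting in a short exact sequence $1 \to P(W) \to B(W) \to W \to 1$ induced by the covering $V^{\mathrm{reg}} \to V^{\mathrm{reg}}/W$. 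In this picture a braided reflection attached to a distinguished pseudo-reflection $s$ is the class of a loop that runs from $x_0$ to a point near a generic point of $\mathrm{Fix}(s)$, encircles $\mathrm{Fix}(s)$ once positively, and returns. Its image in $W$ is $s$, while its $e_s$-th power lies in $P(W)$; this already explains part (iv), since adjoining the relations $s^{e_s}=1$ to a presentation of $B(W)$ exactly kills $P(W)$ and recovers $W$.

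The key analytic input I would isolate is that $V^{\mathrm{reg}}$ is a $K(\pi,1)$ space, i.e. its universal cover is contractible. For real reflection groups this is Deligne's theorem; for the general complex case it is the substantial theorem underlying the whole program. Granting it, $B(W)$ is torsion-free and has the homotopy type of $V^{\mathrm{reg}}/W$, which is what forces the existence of a finite presentation with homogeneous relations: the braid relations come from the $2$-cells of a suitable cell model of $V^{\mathrm{reg}}/W$, and no relation of the form $\mathbf{s}_i^{k}=1$ can occur, precisely because $B(W)$ has no torsion.

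To actually extract the generators $\mathbf{S}$ and the relations $I$, I would use the Lyashko--Looijenga covering together with the Zariski--van Kampen method. Choosing basic invariants $f_1,\dots,f_n$ with $\deg f_n$ maximal, one regards $V/W$ as a family of complex lines fibred over the space of the remaining invariants and projects the discriminant hypersurface down; the resulting finite map is a branched covering whose monodromy over a generic line records how the $n$ points on that line collide and braid. Following this monodromy along a chosen system of paths produces $n$ braided reflections generating $B(W)$, which gives parts (i) and (ii), together with a set of relations that are visibly positive and of equal length on both sides, giving the homogeneous presentation $\langle \mathbf{S} \mid I \rangle$ of part (iii). The $K(\pi,1)$ property is what ensures that this construction behaves uniformly across all types and that the resulting relations present $B(W)$. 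Equivalently, one may package the same data as a Garside structure --- the dual braid monoid attached to a regular element $c$ --- whose atoms are the braided reflections and whose group of fractions is $B(W)$.

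The hard part is unquestionably the $K(\pi,1)$ property: it was a long-standing conjecture, its proof rests on a delicate study of the Lyashko--Looijenga morphism and still requires separate verifications for several exceptional types, and all of the presentation-theoretic consequences above are comparatively formal once contractibility of the universal cover is available. Since the present paper uses this result only as a foundation on which the Hecke-algebra constructions are built, I would invoke it in the form stated rather than reprove it.
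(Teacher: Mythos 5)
The first thing to note is that the paper does not prove this statement at all: it is quoted verbatim as Theorem 0.1 of Bessis's paper \emph{Zariski theorems and diagrams for braid groups} (reference [be] in the bibliography), and is used as a black box. So the honest comparison is between your sketch and Bessis's actual proof, and there your outline has a genuine gap. The central claim of your argument --- that once the $K(\pi,1)$ property of $V^{\mathrm{reg}}$ is granted, the existence of a finite presentation by positive homogeneous relations on braided reflections is ``comparatively formal'' --- is not correct, either logically or historically. Logically, contractibility of the universal cover gives torsion-freeness and a good classifying space, but it does not by itself produce a presentation whose relations are positive words of equal length; extracting such a presentation (via the Lyashko--Looijenga covering, the dual braid monoid, or Zariski--van Kampen on a generic plane section) is exactly the hard content, and in Bessis's work it requires a case-by-case treatment of the Shephard--Todd classification, with the exceptional groups handled through explicit diagrams and partly computational verification. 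Historically, the dependency runs the other way: the theorem you are asked to prove dates from 2001, whereas the $K(\pi,1)$ theorem for arbitrary complex reflection arrangements is Bessis's 2015 Annals paper (reference [bee]); the 2001 proof uses Zariski-type hyperplane-section theorems to reduce fundamental-group computations to low rank, not the $K(\pi,1)$ property.

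Two smaller points. Your torsion-freeness argument is backwards: a homogeneous presentation implies there is a length character $B(W)\to\mathbb{Z}$, hence no generator has finite order, but torsion-freeness of $B(W)$ does not by itself guarantee that any particular construction yields relations of the required homogeneous form --- the issue is existence of the good presentation, not nonexistence of bad relations. Your argument for (iv) --- that adjoining $s^{e_s}=1$ kills $P(W)$ --- is essentially right, but it silently uses the nontrivial fact that $P(W)=\pi_1(V^{\mathrm{reg}})$ is normally generated in $B(W)$ by the meridian loops around the hyperplanes, i.e.\ by conjugates of the elements $\mathbf{s}^{e_s}$; this is a theorem of Brou\'e--Malle--Rouquier and needs to be cited, not asserted. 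Your closing instinct --- invoke the deep input rather than reprove it --- is in fact exactly what the paper does, but then the correct object to invoke is Bessis's Theorem 0.1 itself, not the $K(\pi,1)$ theorem together with an unproven ``formal consequences'' step.
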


The previous theorem allows us to obtain elements of $W$ from elements
of $B(W)$. More precisely, let $\mathbf{w}\in B(W)$. According to the
previous theorem, $\mathbf{w}$ is a word in the braided reflections
$\{\mathbf{s}_1,\dots, \mathbf{s}_n\}$, which generate $B(W)$. We
denote by $w \in W$ the group element obtained from $\mathbf{w}$
by replacing each $\mathbf{s}_i$ with $s_i$.  This process is independent of
the chosen representation of $\mathbf{w}$ as word in the generators of $B(W)$, since the map $\mathbf{w} \mapsto w$ is a homomorphism.

Let $\mathcal{R}$ denotes the set of all distinguished
pseudo-reflections of $W$.  For each $s\in \mathcal{R}$ we choose a
set of $e_s$ indeterminates $u_{s,1},\dots, u_{s,e_s}$, such that
$u_{s,j}=u_{t,j}$ if $s, t \in \mathcal{R}$ are conjugate in $W$. We
denote by $R(W)$ the Laurent polynomial ring
$\mathbb{Z}[u_{s,j},u_{s,j}^{-1}]$.

The \emph{generic Hecke algebra} $H(W)$ associated to $W$ is the
quotient of the group algebra $R(W)[B(W)]$ of $B(W)$ by the ideal
generated by the elements of the form
\begin{equation}
	(\mathbf{s}-u_{s,1})(\mathbf{s}-u_{s,2})\cdots (\mathbf{s}-u_{s,e_s}),
	\label{Hecker}
\end{equation}
where $s$ runs over a set of representatives of the $W$-conjugacy classes of
$\mathcal{R}$. 

Let $\theta : R(W)\rightarrow K$ the natural specialization
$u_{s,j}\mapsto {\rm exp}(2\pi \sqrt{-1}\; j/e_s)$ for every $s\in \mathcal{R}$ and $j=1,\ldots, e_s$. Since, $H(W)\otimes_{\theta}K\simeq K[W]$, the generic Hecke algebra $H(W)$ can be seen as a
deformation of the group algebra $K[W]$.

If $W$ is a real reflection group, $H(W)$ is known as the \emph{Iwahori--Hecke algebra} associated with $W$ (for more details about Iwahori--Hecke algebras one may refer, for example, to \cite[\S 4.4]{GP}).

We obtain an equivalent definition of $H(W)$ if we expand the relations \eqref{Hecker}.
More precisely, $H(W)$ is the quotient of the group algebra $R(W)[B(W)]$  by the elements of the form
\begin{equation}
	{\mathbf{s} }^{e_{s}}-a_{{s},e_{s}-1}{\mathbf{s}}^{e_{s}-1}-a_{{s},e_{s}-2}{\mathbf{s}}^{e_{s}-2}-\dots-a_{{s},0},
	\label{Hecker2}
\end{equation}
where $a_{{s},e_{s}-k}:=(-1)^{k-1}f_k({u}_{{s},1},\dots,{u}_{{s},e_{s}})$ with $f_k$ denoting the $k$-th elementary symmetric polynomial, for $k=1,\ldots,e_{s}$.

Inspired by the notation of the real case, we denote by $b\mapsto T_{b}$ the restriction of the natural surjection $R(W)[B(W)] \rightarrow H(W)$ to $B(W)$.
In the presentation of $H(W)$, apart from
the braid relations coming from the presentation of $B(W)$ (see Theorem \ref{pre} (iii)), we also have the \emph{positive Hecke relations}:
\begin{equation}
	T_{\mathbf{s}}^{e_{s}}=a_{{s},e_{s}-1}T_{\mathbf{s}}^{e_{s}-1}+a_{{s},e_{s}-2}T_{\mathbf{s}}^{e_{s}-2}+\dots+a_{{s},0}.
	\label{ph}
\end{equation}
If we multiply \eqref{ph} with $T_{\mathbf{s}^{-1}}$ we can see that $T_{\mathbf{s}}$ is invertible in $H(W)$ with
\begin{equation}\label{invhecke}
	T_{\mathbf{s} }^{-1}=a_{s,0}^{-1}\,T_{\mathbf{s} }^{e_{s}-1}-a_{s,0}^{-1}\,a_{{s},e_{s}-1}T_{\mathbf{s} }^{e_{s}-2}-a_{s,0}^{-1}\,a_{{s},e_{s}-2}T_{\bf s}^{e_{s}-3}-\dots- a_{s,0}^{-1}\,a_{{s},1}.
\end{equation}
We call  relations \eqref{invhecke} the \emph{inverse Hecke relations}.

\subsection{The two fundamental conjectures}
Iwahori--Hecke algebras have a \emph{standard basis} denoted by $(T_w)_{w \in W}$, indexed by the elements of the finite Coxeter group $W$ (see \cite[IV, \S 2]{Bou05}). Therefore, for the real case, $H(W)$ is a free $R(W)$-module of rank $|W|$. A crucial role to the existence of the standard basis is played by the \emph{length function} on $W$. Such a function does not exist for the complex reflection groups in general and, therefore, one could not repeat the proof of real case to include the complex case, as well.
	Brou\'e, Malle and Rouquier conjectured such a result for non-real complex reflection groups \cite[\S 4]{BMR}:

	\begin{conj}[The BMR freeness conjecture]\label{BMR free}
		The algebra $H(W)$ is a free $R(W)$-module of rank  $|W|$.
	\end{conj}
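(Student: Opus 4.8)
The plan is to verify the conjecture case by case along the Shephard--Todd classification (Theorem~\ref{ShToClas}). For the infinite family $G(de,e,n)$ freeness is classically known, so the task reduces to the $34$ exceptional groups, and I shall describe a uniform strategy for a single such $W$. Throughout I write $R = R(W)$ and $F$ for its field of fractions, and I exploit that, by Theorem~\ref{pre}(iii)--(iv), $H(W)$ is generated as an $R$-algebra by the finitely many images $T_{\mathbf{s}_1}, \dots, T_{\mathbf{s}_n}$ of the braided reflections, subject only to the braid relations in $I$ together with the positive Hecke relations~\eqref{ph}.

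First I would produce an explicit candidate spanning set $\mathcal{B} \subseteq H(W)$ with $|\mathcal{B}| = |W|$. Here I would fix a parabolic subgroup $W'$ with $W' \cap Z(W) = 1$ and a transversal realising a factorisation $W = Z(W)\,W'\,X$, and then lift each of the resulting $|W|$ group elements to a positive word in the $T_{\mathbf{s}_i}$, so that $\mathcal{B}$ carries powers of a central element together with a parabolic structure and contains $1_{H(W)}$. The core step is to show that the $R$-submodule $\Span{\mathcal{B}}$ spanned by $\mathcal{B}$ is all of $H(W)$. Since $1_{H(W)} \in \mathcal{B}$ and $H(W)$ is generated by the $T_{\mathbf{s}_i}$, it suffices to verify that $\Span{\mathcal{B}}$ is stable under left multiplication by each generator: concretely, for every $b \in \mathcal{B}$ one rewrites $T_{\mathbf{s}_i} b$ as an $R$-linear combination of elements of $\mathcal{B}$, using the braid relations to reorder letters and the relations~\eqref{ph} and~\eqref{invhecke} to collapse any forbidden power of a generator. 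Closure of all these products inside $\Span{\mathcal{B}}$ gives $H(W) = \Span{\mathcal{B}}$, and hence $\dim_F\bigl(H(W) \otimes_R F\bigr) \le |W|$.

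It remains to prove that the $|W|$ elements of $\mathcal{B}$ are $R$-linearly independent. For this I would not argue abstractly but assemble the coefficients computed above into matrices, obtaining a candidate representation $\rho \colon H(W) \to \mathrm{Mat}_{|W|}(R)$ on the free module $R^{\mathcal{B}}$ in which each generator $T_{\mathbf{s}_i}$ acts by its left-multiplication table on $\mathcal{B}$, a Hecke-algebra analogue of a coset table. Once one checks that these matrices satisfy all the defining relations of $H(W)$, the map $\rho$ is a genuine algebra homomorphism, and by construction $\rho(b)\,e_{1_{H(W)}} = e_b$ for each $b \in \mathcal{B}$. Thus a relation $\sum_{b} c_b\, b = 0$ with $c_b \in R$ yields, after applying $\rho$ and evaluating at $e_{1_{H(W)}}$, the identity $\sum_b c_b\, e_b = 0$ among distinct standard basis vectors, forcing every $c_b = 0$. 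Combined with the spanning statement, $\mathcal{B}$ is a free $R$-basis, so $H(W)$ is free of rank $|W|$, in agreement with the specialisation $\theta$ under which $H(W) \otimes_\theta K \cong K[W]$ has dimension $|W|$.

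The main obstacle is twofold and computational. On the one hand, the rewriting needed for the spanning step grows rapidly with $|W|$, with $|Z(W)|$, and with the number of indeterminates in $R$, so establishing closure of $\Span{\mathcal{B}}$ by hand becomes infeasible and must be automated. On the other hand, the whole scheme hinges on choosing the transversal $X$ so that the resulting multiplication tables actually close up and satisfy the relations: an ill-chosen $X$ produces matrices that violate the braid relations, and then $\rho$ is not a representation at all. Guaranteeing the existence of a good $X$, and certifying the defining relations for the resulting coset table, is the delicate part of the argument.
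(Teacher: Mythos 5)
You should first be aware that the paper does not prove this statement at all: it is recorded as a conjecture whose resolution is imported from the literature. The paper explicitly says the conjecture was settled by a case-by-case, collective effort over roughly two decades and points to \cite[Theorem 3.5]{BCCK} for the state of the art; nothing in the paper constitutes a proof of it. So the honest comparison is between your outline and the published proofs, together with the paper's own basis-construction machinery, which your sketch closely resembles: the factorisation $W = Z(W)\,W'\,X$, the lift to a candidate $z$-basis containing $1_{H(W)}$, the rewriting of products against a coset table, and the extraction of a matrix representation are exactly the tools of Section 3 of the paper and of \cite{ChP}.

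As a proof of the conjecture, however, your proposal has a genuine gap: it is a plan, not an argument. All of its mathematical content is deferred to computations (``one rewrites\dots'', ``once one checks that these matrices satisfy all the defining relations\dots'') that are never carried out for a single group, and the conjecture is precisely the assertion that these computations succeed. Moreover, the uniform strategy you describe is only known to be executable for the rank-two groups $G_4, \dots, G_{22}$; the higher-rank exceptional groups $G_{23}, \dots, G_{37}$ (for instance $G_{34}$, of order roughly $4 \times 10^7$) were handled in the literature by entirely different, case-specific methods, and closing a coset table of the size you propose over a multivariate Laurent polynomial ring is not remotely practical there. Two further points of comparison. First, the paper's own use of this machinery is not circular only because it \emph{assumes} freeness: the statement that $H(G_j)$ is a free $H'(G_j)$-module of rank $|G_j/G_j'|$, cited from \cite{CC}, itself rests on the already-established freeness theorem, whereas you correctly replace it by the faithful-representation argument, whose logic (matrices satisfying the braid and Hecke relations yield a homomorphism $\rho$ with $\rho(b)e_{1} = e_b$, forcing independence) is sound. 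Second, that extra step is avoidable: by a result of Brou\'e--Malle--Rouquier \cite{BMR}, once $H(W)$ is spanned by $|W|$ elements over $R(W)$ it is automatically free of rank $|W|$, since the specialisation $H(W) \otimes_\theta K \simeq K[W]$ bounds the dimension from below; this is how the actual proofs in the literature conclude, reducing the entire problem to the spanning step.
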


This conjecture has been solved since 2017, after almost 20 years since it was first formulated. The proof uses
a case-by-case analysis approach. It has turned out to be a collective outcome of several people that have provided with their own results through their papers over the course of the past years. A detailed state of the art of this proof can be found in \cite[Theorem 3.5]{BCCK}.

The proof of the BMR freeness conjecture includes
the construction of an $R(W)$-basis consisting of $|W|$ elements for $H(W)$. More precisely, the first author \cite{Ch17, Ch18} has
constructed explicit bases for the cases of $G_4,\ldots,G_{15}$, which are of a particular form. This form has been used in \cite{BCC,BCCK, CC, ChP} and we use it also for the purposes of this paper. Therefore, it is necessary to give a detailed description of it.

Suppose that $W$ is an irreducible complex reflection group.
It is known (\cite[Theorem 12.8]{bee} and \cite[Theorem 2.24]{BMR}) that $Z(B(W))$ is infinite cyclic, generated by $\mathbf{z}$,  and that
$z$ is generator of the cyclic group $Z(W)$ of order $k$.
Let $m:=|W/Z(W)|$ and let $s$ be a distinguished pseudo-reflection of order $e_s$.

\begin{defi}A basis $\mathcal{B}$ of $H(W)$ as $R(W)$-module is called $z$-basis, if there are braid groups elements $\mathbf{b}_1,\dots, \mathbf{b}_{m/e_s}\in B(W)$ with  $\mathbf{b}_1=1$, such that
	$$\mathcal{B}=\{1_{H(W)},T_{\mathbf{z}},\dots, T_{\mathbf{z}}^{k-1}\}\cdot \{1_{H(W)},T_{\mathbf{s}}, \dots, T_{\mathbf{s}}^{e_s-1}\}\cdot \{T_{\mathbf{b}_1},\dots, T_{\mathbf{b}_{m/e_s}}\}$$
	\end{defi}
Note that the set $\{1_{H(W)},\mathbf{s}, \dots, \mathbf{s}^{e_s-1}\}$ in this definition corresponds to a rank $1$ parabolic subgroup $W'$ of $W$, and that $\{1_{H(W)},T_{\mathbf{z}},\dots, T_{\mathbf{z}}^{k-1}\}\cdot \{T_{\mathbf{b}_1},\dots, T_{\mathbf{b}_{m/e_s}}\}$ corresponds to a set of coset representatives of $W/W'$.

In \cite{Ch18} the first author provides $z$-bases for the cases of $G_4,\dots, G_{15}$ and in \cite{ChP} we provide such a basis also for $G_{22}$.

\begin{ex}
	Let $G_6 = \Span{s, t \mid s^2 = t^3 = 1, ststst = tststs}$. We have $Z(G_6)=\Span{z}$, where $z=ststst$ with $z^4=1$.
In order to make notation lighter, we denote again with $s$ and $t$ the generators of $H(G_6)$ (instead of $T_{\mathbf{s}}$ and $T_{\mathbf{t}}$) and with $z$ the element $T_{\mathbf{z}}$.
We know from \cite{Ch18} that $H(G_6)$ admits the following basis $\mathcal{B}$, as $R(G_6)$ module:
	$$\left\{
	\begin{matrix}
		z^k, z^k{t}, z^k{t}^2, z^k{t}, z^k{s}{t}, z^k{s}{t}^2, z^k{ts}, z^k{t}^2{s}, z^k{tst}, z^k{ts}{t}^2, z^k{t}^2{st}, z^k{t}^2{s}{t}^2
	\end{matrix}\left|\,\,k=0,1,2,3\right.\right\}.
	$$
 One can easily see that this is a $z$-basis, since $\mathcal{B}=\{1,z,z^2,z^3\}\cdot\{1, {t},{t}^2\}\cdot\{1,{s},{st},{s}{t}^2\}$.
	\qed
\end{ex}

Revisiting the real case again,  Iwahori-Hecke algebras $H(W)$ are examples of symmetric algebras
 over their ring of definition $R(W)$, namely they admit a linear map $\tau:  H(W) \rightarrow R(W)$, such that
 $\tau(hh') = \tau(h'h)$ for all $h, h' \in H(W)$ and the bilinear form
 $(h, h') \mapsto \tau(hh')$ is non-degenerate. Such a linear map is called \emph{symmetrising trace}.

 A symmetrising trace for Iwahori-Hecke algebras is given by $\tau(T_w)=\delta_{1w}$  \cite[IV, \S 2]{Bou05} and it turns out that it is unique. Brou\'e, Malle and Michel  conjectured the existence of a unique symmetrising trace also for non-real complex reflection groups \cite[\S2.1, Assumption 2(1)]{BMM}:

	\begin{conj}[The BMM symmetrising trace conjecture]
		\label{BMM sym}
		Let $W$ be a complex reflection group.
		There exists a linear map $\tau: H(W)\rightarrow R(W)$ such that:
		\begin{itemize}
			\item[$(1)$] $\tau$ is a symmetrising trace, that is, we have $\tau(h_1h_2)=\tau(h_2h_1)$ for all  $h_1,h_2\in H(W)$, and the bilinear map $H(W)\times H(W)\rightarrow R(W)$,  $(h_1,h_2)\mapsto\tau(h_1h_2)$ is non-degenerate. \smallbreak
			\item[$(2)$] $\tau$ becomes the canonical symmetrising trace on $K[W]$ when ${u}_{s,j}$ specialises to ${\rm exp}(2\pi \sqrt{-1}\; j/e_s)$ for every $s\in S$ and $j=1,\ldots, e_s$. \smallbreak
			\item[$(3)$]  $\tau$ satisfies
			\begin{equation}\label{extra}
				\tau(T_{b^{-1}})^* =\frac{\tau(T_{b\boldsymbol{\pi}})}{\tau(T_{\boldsymbol{\pi}})}, \quad \text{ for all } b \in B(W),
			\end{equation}
			where
			$x \mapsto x^*$ is the automorphism of  $R(W)$ given by $u_{s,j} \mapsto u_{s,j}^{-1}$ and $\boldsymbol{\pi}$ the element $\mathbf{z}^{|Z(W)|}$.
			\smallbreak
		\end{itemize}
	\end{conj}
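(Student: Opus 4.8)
The plan is to verify, for each $W \in \{G_4,\dots,G_{15}\}$ separately, the three conditions of Conjecture~\ref{BMM sym} for the linear map $\tau_{\mathcal{B}}$ attached to a carefully chosen $z$-basis $\mathcal{B}$. I begin by setting up, for each $W$, a factorization $W = Z(W)\,W'\,X$ in which $W'$ is a fixed rank-one parabolic subgroup, $Z(W)X$ is a transversal of $W'$ in $W$, and $X=\{b_1,\dots,b_{m/e_s}\}$ is the vertex set of a spanning tree whose edges are labelled by generators of $W$. Within each family I arrange these data coherently: choosing $X$ first for an intermediate group ($G_6$ in the tetrahedral family, $G_9$ in the octahedral one), lifting it to the maximal group ($G_7$, resp.\ $G_{11}$), and restricting it back down to the normal reflection subgroups $G_4,G_5$ (resp.\ $G_8,G_{10}$); the remaining octahedral groups $G_{12},\dots,G_{15}$ require an independently chosen factorization. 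Lifting the elements of $X$ to the braid group then produces the candidate $z$-basis $\mathcal{B}=\{1,T_{\mathbf{z}},\dots,T_{\mathbf{z}}^{k-1}\}\cdot\{1,T_{\mathbf{s}},\dots,T_{\mathbf{s}}^{e_s-1}\}\cdot\{T_{\mathbf{b}_1},\dots,T_{\mathbf{b}_{m/e_s}}\}$, a set of $|W|$ elements.

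To make computations effective, I would replace the presentation of $H(W)$ by a faithful matrix representation built from a coset table associated with the spanning tree, exactly in the spirit of \cite{ChP}. In this model every generator acts by an explicit matrix over $R(W)$, so rewriting an arbitrary element of $H(W)$ in the basis $\mathcal{B}$ reduces to matrix arithmetic that can be automated. This simultaneously confirms that $\mathcal{B}$ is a genuine $z$-basis and lets me compute every product $bb'$ with $b,b'\in\mathcal{B}$, read off its coefficient of $1_{H(W)}$, and thereby assemble the Gram matrix $A=(\tau_{\mathcal{B}}(bb'))_{b,b'\in\mathcal{B}}$.

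It then remains to check the three conditions. For condition~$(1)$ I verify that $A$ is symmetric and that $\det A$ is a unit in $R(W)$; these two facts express precisely that $\tau_{\mathcal{B}}$ is a trace and that the bilinear form $(h,h')\mapsto\tau_{\mathcal{B}}(hh')$ is non-degenerate. Condition~$(2)$ follows by applying the specialisation $u_{s,j}\mapsto\exp(2\pi\sqrt{-1}\,j/e_s)$, under which $\mathcal{B}$ becomes the basis of $K[W]$ indexed by $W$ and $\tau_{\mathcal{B}}$ becomes the canonical trace $w\mapsto\delta_{1w}$. For condition~$(3)$, following the method of \cite{ChP}, I compute the relevant elements $T_{b^{-1}}$, $T_{b\boldsymbol{\pi}}$ and $T_{\boldsymbol{\pi}}$ in the matrix model and verify identity~\eqref{extra} by a finite calculation.

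The main obstacle is the symmetry of $\tau_{\mathcal{B}}$ required in condition~$(1)$: as noted before the theorem, $\tau_{\mathcal{B}}$ depends on the chosen $z$-basis, and an arbitrary transversal generally yields a non-symmetric Gram matrix. The crux of the argument is therefore the construction of $X$ as the vertex set of a suitable spanning tree, together with the lift-and-restrict procedure that keeps the bases of the groups in a family compatible; these choices must be made so that the resulting $A$ comes out symmetric. Once a symmetric $A$ with unit determinant has been secured, the verification of conditions $(2)$ and $(3)$ is routine, if computationally demanding, linear algebra over $R(W)$, carried out with the programs available at \cite{pr}.
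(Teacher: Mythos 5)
Your strategy matches the paper's in its main lines: the factorization $G_j = Z_j G_j' X_j$ with a rank-one parabolic subgroup and a spanning-tree transversal, the lift-and-restrict construction within each family (with independent choices for $G_{12},\dots,G_{15}$), the coset table giving a faithful matrix model of $H(G_j)$ in the spirit of \cite{ChP}, and the verification of Condition $(1)$ by showing the Gram matrix is symmetric with unit determinant (the paper additionally organises this computation through the block recursion of Lemma \ref{gram}, but that is an efficiency device, not a logical difference). Your handling of Condition $(2)$ also agrees in substance with the paper, which deduces it from the lifting conjecture \ref{lift}, valid for any $z$-basis by Remark \ref{remex}.

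The gap is in Condition $(3)$. Identity \eqref{extra} is demanded for \emph{every} $b$ in the braid group $B(W)$, which is infinite, so ``verifying identity \eqref{extra} by a finite calculation'' is not a meaningful procedure as stated: checking it for finitely many $b$ does not imply it for all $b$, because neither side of \eqref{extra} is $R(W)$-linear in $T_b$ (the left-hand side involves inversion in $B(W)$ and the ring automorphism $x \mapsto x^*$, so the identity does not propagate from a basis by linearity). What is missing is the reduction that makes the check finite: since $\mathcal{B}$ is a $z$-basis, the lifting conjecture \ref{lift} holds (Remark \ref{remex}), and since the lifted elements form an $R(W)$-basis of $H(W)$, \cite[Proposition 2.7]{MM10} shows that Condition \eqref{extra} is \emph{equivalent} to the finite condition \eqref{extra2}, namely $\tau(T_{x^{-1}\boldsymbol{\pi}}) = 0$ for the finitely many $x \in \mathcal{B}\setminus\{1_{H(W)}\}$. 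The paper verifies exactly this with the closed coset table, expressing each element $x^{-1}\boldsymbol{\pi}$, for $x \in \mathcal{B}_j$, as a linear combination of $\mathcal{B}_j$ and confirming that the coefficient of $1_{H(G_j)}$ vanishes whenever $x \neq 1_{H(G_j)}$. Without invoking this equivalence, or proving a substitute for it, your computation cannot establish Condition $(3)$; it is precisely this reduction, and not the symmetry of the Gram matrix alone, that turns the conjecture into a finitely checkable statement.
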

	Since we have the validity of the BMR freeness conjecture, we know  \cite[\S 2.1]{BMM} that if there exists such a linear map $\tau$, then it is unique. If this is the case, we call $\tau$ the \emph{canonical symmetrising trace on}  $H(W)$.

Apart from the real case, the BMM symmetrising trace conjecture is known to hold  for a few exceptional groups and for the infinite family (detailed references can be found in \cite[Conjecture 3.3]{CC}).

Malle and Michel suggested a construction of a basis by lifting the
elements of $W$ to $B(W)$, which satisfy an additional property with
respect to $\tau$ \cite[Conjecture 2.6]{MM10}:

\begin{conj}[The lifting conjecture]\label{lift}
  There exists a section (that is, a map admitting a left inverse)
  $W \rightarrow \boldsymbol{W}\subset B(W)$,
  $w \mapsto \boldsymbol{w}$ of $W$ in $B(W)$ such that
  $1 \in \boldsymbol{W}$, and such that for any
  $\boldsymbol{w} \in \boldsymbol{W}$ we have
  $\tau(T_{\boldsymbol{w}}) = \delta_{1\boldsymbol{w}}$.
\end{conj}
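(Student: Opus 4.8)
The plan is to deduce the lifting conjecture for the groups $G_4,\dots,G_{15}$ from the factorization $W = Z(W)\,W'\,X$ and the symmetrising trace $\tau_{\mathcal{B}}$ produced while proving our main theorem. First I would turn the set-theoretic factorization into a section. Writing $W' = \Span{s}$ and $X = \{x_1,\dots,x_{m/e_s}\}$ for the vertex set of the spanning tree, the hypothesis that $Z(W)X$ represents the right cosets of $W'$ means that every $w\in W$ has a unique expression $w = z^k s^i x_j$ with $0\le k<|Z(W)|$, $0\le i<e_s$ and $x_j\in X$. Lifting each factor to the braid group by $z\mapsto\mathbf{z}$, $s\mapsto\mathbf{s}$ and $x_j\mapsto\mathbf{b}_j$ (the positive word read off the tree edges), I set $\boldsymbol{w} := \mathbf{z}^k\mathbf{s}^i\mathbf{b}_j$ and let $\boldsymbol{W}$ be the image of $w\mapsto\boldsymbol{w}$.

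Next I would check that this really is a section with $1\in\boldsymbol{W}$. Since $\mathbf{b}_1 = 1$, the triple $(0,0,1)$ corresponds to $w = 1$ with $\boldsymbol{w} = \mathbf{b}_1 = 1$, so $1\in\boldsymbol{W}$. The homomorphism $B(W)\to W$, $\mathbf{w}\mapsto w$ of Theorem~\ref{pre} sends $\boldsymbol{w} = \mathbf{z}^k\mathbf{s}^i\mathbf{b}_j$ to $z^k s^i x_j = w$; uniqueness of the factorization makes $w\mapsto\boldsymbol{w}$ a well-defined map, and this projection is then a left inverse, certifying that it is a section. Because $b\mapsto T_b$ is multiplicative, the images $T_{\boldsymbol{w}} = T_{\mathbf{z}}^k T_{\mathbf{s}}^i T_{\mathbf{b}_j}$ are exactly the elements of the $z$-basis $\mathcal{B}$, which is a basis of $H(W)$ by the (known) BMR freeness conjecture for these groups.

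It then remains to identify the trace. By our main theorem, $\tau_{\mathcal{B}}$ satisfies conditions~$(1)$--$(3)$ of the BMM conjecture, and since BMR freeness holds it is the unique canonical symmetrising trace $\tau$. For a basis element $T_{\boldsymbol{w}}$ the definition of $\tau_{\mathcal{B}}$ gives $\tau(T_{\boldsymbol{w}}) = \tau_{\mathcal{B}}(T_{\boldsymbol{w}}) = 1$ when $T_{\boldsymbol{w}} = 1_{H(W)}$, i.e.\ $\boldsymbol{w} = 1$, and $0$ otherwise; that is, $\tau(T_{\boldsymbol{w}}) = \delta_{1\boldsymbol{w}}$ for all $\boldsymbol{w}\in\boldsymbol{W}$, which is the assertion.

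The real work — and the main obstacle — is hidden in the appeal to the main theorem, namely the proof that $\tau_{\mathcal{B}}$ is a symmetrising trace satisfying $(2)$ and $(3)$. Concretely this amounts to showing the Gram matrix $A = (\tau_{\mathcal{B}}(bb'))_{b,b'\in\mathcal{B}}$ is symmetric with unit determinant, which forces one to expand every product $bb'$ in the basis $\mathcal{B}$. I expect the two delicate points to be exactly those flagged above: first, choosing the tree, and hence $X$, so that the factorization is a genuine unique factorization and the resulting $T_{\boldsymbol{w}}$ do form a basis; and second, carrying out the basis expansions, where I would follow the coset-table approach of \cite{ChP}, replacing the presentation of $H(W)$ by a faithful matrix representation relative to $\mathcal{B}$ so that the products $bb'$, and therefore the entries of $A$, can be computed automatically.
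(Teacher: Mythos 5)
Your proposal is correct, and your construction of the section is precisely the one the paper intends: Remark~\ref{remex} asserts that the existence of a $z$-basis already yields the lifting conjecture, and its (unwritten) justification is exactly your first two paragraphs --- the unique factorization $w = z^k s^i x_j$ coming from the coset decomposition gives the section $w \mapsto \mathbf{z}^k\mathbf{s}^i\mathbf{b}_j$, the elements $T_{\boldsymbol{w}}$ are exactly the elements of $\mathcal{B}$, and $\tau_{\mathcal{B}}(T_{\boldsymbol{w}}) = \delta_{1\boldsymbol{w}}$ holds by the very definition of $\tau_{\mathcal{B}}$. Where you genuinely diverge is in the logical order. You read $\tau$ in the statement as the canonical symmetrising trace, and therefore must invoke the main theorem to identify $\tau$ with $\tau_{\mathcal{B}}$ before concluding. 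The paper instead reads the lifting property as a statement about the candidate trace $\tau_{\mathcal{B}}$ itself, so that it holds definitionally, before and independently of the main theorem --- and this order matters, because the paper then uses the lifting property as an \emph{input} to the main theorem: it yields Condition~(2) of Conjecture~\ref{BMM sym} for $\tau_{\mathcal{B}}$ and converts Condition~(3) into the computationally checkable condition~\eqref{extra2}. (In particular, Condition~(2) is not obtained from the Gram-matrix computation, as your closing paragraph suggests; the symmetry and unit determinant of the Gram matrix give only Condition~(1), while Condition~(3) requires the separate verification of~\eqref{extra2} via the coset tables.) Your derivation is not circular, since the paper's proof of the main theorem consumes only the definitional property of $\tau_{\mathcal{B}}$ and not the canonical-$\tau$ form of the lifting statement; but it could not be substituted for Remark~\ref{remex} inside the paper's own argument, where lifting must come first and cheaply. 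What your reading buys is the stronger, literal form of the conjecture (the trace appearing in it really is the canonical one); what the paper's reading buys is an ingredient available early enough to drive the proof of the main theorem.
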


If the lifting conjecture holds, then Condition (2) of Conjecture
\ref{BMM sym} is obviously satisfied. If further the elements
$\{T_{\boldsymbol{w}}\,|\,\boldsymbol{w} \in \boldsymbol{W}\}$ form an
$R(W)$-basis of $H(W)$, then, by \cite[Proposition 2.7]{MM10},
Condition \eqref{extra} of Conjecture \ref{BMM sym} is equivalent to:
\begin{equation}\label{extra2}
  \tau(T_{x^{-1}\boldsymbol{\pi}})=0, \quad \text{ for all } x \in \mathcal{B} \setminus \{1_{H(W)}\}.
\end{equation}

\begin{rem}\label{remex}
  If $H(W)$ admits a $z$-basis, then we have the validity of the
  lifting conjecture \ref{lift}.
\end{rem}

Since generic Hecke algebras are free algebras (we have the validity
of the BMR freeness conjecture \ref{BMR free}), a linear map $\tau$ is
a symmetrising trace, if the Gram matrix
$A:=(\tau(bb'))_{b,b'\in \mathcal{B}}$ is symmetric and its
determinant is a unit in $R(W)$ for some (and hence every) basis
of the algebra.

Using this approach, the proof of the
BMM symmetrising trace conjecture depends on the choice of a suitable
basis $\mathcal{B}$ for the generic Hecke algebra.
If $1_{H(W)} \in \mathcal{B}$ then we can define a linear map
$\tau_{\mathcal{B}}\colon H(W) \rightarrow R(W)$ by setting
$\tau_{\mathcal{B}}(1_{H(W)}) = 1$ and $\tau_{\mathcal{B}}(b) = 0$ for
$b \neq 1_{H(W)}$.

\section{Towards the validity of the BMM symmetrising trace conjecture}

The goal of this section is to find a suitable $z$-basis $\mathcal{B}$ for the generic Hecke algebra associated to each of the groups of the tetrahedral and octahedral family, such that $\tau_{\mathcal{B}}$  satisfies the BMM symmetrising trace conjecture.
These are the groups $G_j$, $j=4,\dots,15$ in the Shephard-Todd classification.

Briefly, the procedure is the following: For each complex reflection group $G_j$ under consideration, we use a factorization 
\begin{align*}
	G_j = Z_j G_j' X_j = \{yvx\,:\,y\in Z_j,\,v\in G_j',\,x\in X_j\}
\end{align*}
of
$G_j$, where $Z_j$ is the centre of $G_j$, $G_j'$ is a parabolic subgroup of $G_j$ (with $G_j' \cap Z_j = 1$),
and $Z_jX_j$ is a set of representatives of the right cosets of $G_j'$ in $G_j$.

As in Section~2.2 of~\cite{ChP}, the set $X_j$ is the vertex set of a spanning tree with edges labelled by generators of~$G_j$.  For each $x \in X_j$, this tree defines a unique word in the generators whose product is~$x$.

Considering the centre $Z_j$ of $G_j$, it is known that $Z_j$ is a subgroup of the centre of the maximal group in the family.
In the tetrahedral family, i.e., for $j=4,\dots, 7$, let $z$ be a generator of the centre of $G_7$, which is the maximal group of this family, and set $z_j:=z^{\ell_j}$, where $\ell_j = |G_7 : G_j|$.  Then,
for each $G_j$, $j=4,\dots, 7$, we have
$Z_j = G_j \cap Z_7$ and, hence,  $Z_j=\Span{z_j}$.
 There is an analogue for the groups in the octahedral family, i.e., for $j=8,\dots,15$, replacing $G_7$ by the maximal group $G_{11}$ of this family.

Our goal is to use the aforementioned  factorization as an inspiration to construct a $z_j$-basis for each $H(G_j)$.
The rest of this section is devoted to this procedure.

\subsection{Factorization}\label{sec:factorization}
\subsubsection{Tetrahedral family}\label{tet}
The tetrahedral family consists of the complex reflection groups
$G_4$, $G_5$, $G_6$ and $G_7$.  Each of them is a normal reflection
subgroup of $G_7$, as illustrated in Figure~\ref{fig:tetrahedral}. In
this figure, there appear also the groups $G(4,4,2)$ of the infinite
family and the quaternion group $Q_8$, in order to complete the
diagram.


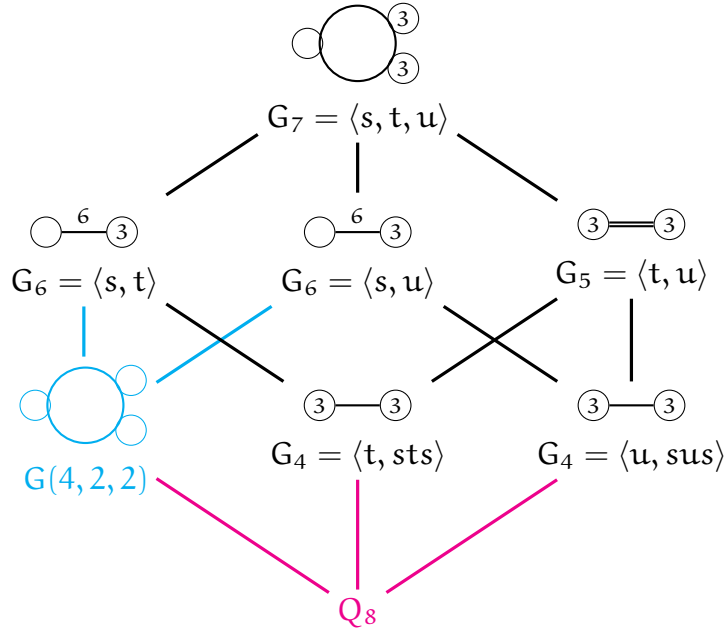
\begin{figure}[htb]
	\begin{center}
		\begin{tikzpicture}[xscale=1.8,yscale=1.2]
			\node[rectangle,inner sep=1mm] (G7) at (0,10) {
				\begin{tikzpicture}
					\draw[thick] (0,0) circle (5mm);
					\node[draw,circle] (1) at (-0.66,0) {};
					\node[draw,circle,inner sep=0.5mm] (2) at (0.6,0.33) {$\scriptstyle 3$};
					\node[draw,circle,inner sep=0.5mm] (3) at (0.6,-0.33) {$\scriptstyle 3$};
					\node[rectangle] at (0,-1) {$G_{7} = \Span{s,t,u}$};
				\end{tikzpicture}
			};
			\node[rectangle,inner sep=1mm] (G6a) at (-2,8) {
				\begin{tikzpicture}[baseline=(2.text)]
					\node[draw,circle] (1) at (0,0) {};
					\node[draw,circle,inner sep=0.5mm] (2) at (1,0) {$\scriptstyle 3$};
					\draw[thick] (1) -- node[above] {$_6$} (2);
					\node[rectangle] at (0.5,-0.67) {$G_{6} = \Span{s,t}$};
				\end{tikzpicture}
			};
			\node[rectangle,inner sep=1mm] (G6b) at (0,8) {
				\begin{tikzpicture}[baseline=(2.text)]
					\node[draw,circle] (1) at (0,0) {};
					\node[draw,circle,inner sep=0.5mm] (2) at (1,0) {$\scriptstyle 3$};
					\draw[thick] (1) -- node[above] {$_6$} (2);
					\node[rectangle] at (0.5,-0.67) {$G_{6} = \Span{s,u}$};
				\end{tikzpicture}
			};
			\node[rectangle,inner sep=1mm] (G5) at (2,8) {
				\begin{tikzpicture}[baseline=(1.text)]
					\node[draw,circle,inner sep=0.5mm] (1) at (0,0) {$\scriptstyle 3$};
					\node[draw,circle,inner sep=0.5mm] (2) at (1,0) {$\scriptstyle 3$};
					\draw[thick,double] (1) -- (2);
					\node[rectangle] at (0.5,-0.67) {$G_{5} = \Span{t,u}$};
				\end{tikzpicture}
			};
			\node[cyan,rectangle,inner sep=1mm] (G422) at (-2,6) {
				\begin{tikzpicture}[baseline=(1.text)]
					\draw[thick] (0,0) circle (5mm);
					\node[draw,circle] (1) at (-0.66,0) {};
					\node[draw,circle] (2) at (0.6,0.33) {};
					\node[draw,circle] (3) at (0.6,-0.33) {};
					\node[rectangle] at (0,-1) {$G(4,2,2)$};
				\end{tikzpicture}
			};
			\node[rectangle,inner sep=1mm] (G4a) at (0,6) {

				\begin{tikzpicture}[baseline=(1.text)]
					\node[draw,circle,inner sep=0.5mm] (1) at (0,0) {$\scriptstyle 3$};
					\node[draw,circle,inner sep=0.5mm] (2) at (1,0) {$\scriptstyle 3$};
					\draw[thick] (1) -- (2);
					\node[rectangle] at (0.5,-0.67) {$G_{4} = \Span{t,sts}$};
				\end{tikzpicture}
			};
			\node[rectangle,inner sep=1mm] (G4b) at (2,6) {
				\begin{tikzpicture}[baseline=(1.text)]
					\node[draw,circle,inner sep=0.5mm] (1) at (0,0) {$\scriptstyle 3$};
					\node[draw,circle,inner sep=0.5mm] (2) at (1,0) {$\scriptstyle 3$};
					\draw[thick] (1) -- (2);
					\node[rectangle] at (0.5,-0.67) {$G_{4} = \Span{u,sus}$};
				\end{tikzpicture}
			};
			\node[magenta,rectangle,inner sep=1mm] (Q8) at (0,4) {$\mathop{Q_8}
				$};

			\draw[very thick] (G6b) -- (G7);
			\draw[very thick] (G5) -- (G7);
			\draw[very thick] (G6a) -- (G7);
			\draw[very thick,cyan] (G422) -- (G6a);
			\draw[very thick,cyan] (G422) -- (G6b);
			\draw[very thick] (G4a) -- (G6a);
			\draw[very thick] (G4a) -- (G5);
			\draw[very thick] (G4b) --  (G6b);
			\draw[very thick] (G4b) --  (G5);
			\draw[very thick,magenta] (Q8) --  (G422);
			\draw[very thick,magenta] (Q8) --  (G4a);
			\draw[very thick,magenta] (Q8) --  (G4b);
		\end{tikzpicture}
	\end{center} 
	\caption{Tetrahedral Family.}
	\label{fig:tetrahedral}
\end{figure}


The maximal group $G_7$ of the tetrahedral family has the
presentation
\begin{align*}
	G_7 = \Span{s, t, u \mid s^2 = t^3 = u^3 = 1,\,
		stu = tus = ust}.
\end{align*}
We set $z: = stu$, a generator of the centre of $G_7$.  Thus
\begin{align*}
	Z_7: = \Span{z} = \{1, z, z^2, \dots, z^{11}\}
\end{align*}
is the centre of $G_7$.

We now take advantage of the fact that
representatives for each of the groups $G_j$, $j = 4,5,6,7$,
can be chosen as subgroups of $G_7$ in such a way that they contain
the subgroup $U = \Span{u} = \{1, u, u^2\}$ as a parabolic subgroup.
So we have $G_j=Z_jUX_j=\{yvx\,:\,y\in Z_j,\,v\in U,\,x\in X_j\}$, for
$j=4,5,6,7$, where $Z_j$ is the centre of $G_j$.
%
It remains to choose $X_j$ so that $Z_j X_j$ is a set of (right) coset representatives of $U$ in $G_j$.

We start by choosing a set $X_6$, then we lift it to  obtain $X_7$, from where we restrict it to obtain $X_5$ and $X_4$.
The details are as follows.
\smallbreak\smallbreak \smallbreak  \noindent
\textbf{The group} $\mathbf{G_6.}$ $$G_6 = \Span{s, u} \leq G_7.$$
Using $t^3 = 1$, it follows that
\begin{align*}
	z^3 = (stu)^3 = su su su = us us us.
\end{align*}
Note that $Z_6= \Span{z^3} = \{1, z^3, z^6, z^9\}$ is the centre of $G_6$.

We choose
\begin{align*}
	X_6 = \{1, s, su, sus\}.
\end{align*}
Then
\begin{align*}
	G_6 = Z_6 \,  U \,  X_6 = \{y v x : y \in Z_6, \, v \in U, \, x \in X_6\}
\end{align*}
$X_6$ is a vertex set of a spanning tree, as illustrated in the following figure:
\begin{figure}[htb]
	\begin{center}
		\begin{tikzpicture}
			\node (1) at (0,0) {$x_1$};
			\node (2) at (1,0) {$x_2$};
			\node (3) at (2,0) {$x_3$};
			\node (4) at (3,0) {$x_4$};

			\draw (1) -- node[above] {$s$} (2);
			\draw (2) -- node[above] {$u$} (3);
			\draw (3) -- node[above] {$s$} (4);
		\end{tikzpicture}
	\end{center} 
\end{figure}
\smallbreak\smallbreak \smallbreak \noindent
\textbf{The group} $\mathbf{G_7.}$
 $$G_7 = \Span{s, t, u}.$$
The set $\{1, z, z^2\}$ is a transversal of
the cosets of $G_6$ inside $G_7$, i.e., $G_7 = G_6 \cup z G_6 \cup z^2 G_6$.
From $Z_7 = Z_6 \cup z Z_6 \cup z^2 Z_6$
it follows that
\begin{align*}
	G_7 = Z_7\,  U\,  X_7,
\end{align*}
where $X_7 = X_6 = \{1, s, su, sus\}$.
\smallbreak\smallbreak\smallbreak  \noindent
			\textbf{The group} $\mathbf{G_5.}$ $$G_5 = \Span{t, u}.$$
		Using $s^2 = 1$, it follows that
		\begin{align*}
			z^2 = (stu)^2 = tu tu = ut ut.
		\end{align*}
		Note that $Z_5 = \Span{z^2} = \{1, z^2, z^4, z^6, z^8, z^{10}\}$ is the centre of $G_5$.  It follows that
		\begin{align*}
			G_5 = Z_5\,  U\,  X_5,
		\end{align*}
		where
		\begin{align*}
			X_5 = (X_7 \cup z X_7) \cap G_5
			= \{1, sus, stu \cdot s, stu \cdot su\}
			= \{1, tut^{-1}, tu, tu^2\}.
		\end{align*}
	 $X_5$ is a vertex set of a spanning tree, if $tu$ and $t^{-1}$ are added as generators:
\begin{figure}[htb]
	\begin{center}
		\begin{tikzpicture}
			\node (1) at (0,0) {$x_1$};
			\node (4) at (3,0) {$x_4$};
			\node (2) at (2,0) {$x_2$};
			\node (3) at (1,0) {$x_3$};

			\draw (1) -- node[above] {$tu$} (3);
			\draw (3) -- node[above] {$t^{-1}$} (2);
			\draw (2) -- node[above] {$tu$} (4);
		\end{tikzpicture}
	\end{center} 
\end{figure}



		\smallbreak\smallbreak\smallbreak  \noindent
\textbf{The group} $\mathbf{G_4.}$ $$G_4 = \Span{u, v},$$ where $v = sus = tut^{-1}$.	Set $w = z^3 s$. Using $s^2 = t^3 = 1$ it follows that $w = (stu)^3s = uvu = vuv$. Moreover, $s^{-1}w s = w$, $s^{-1} v s = u$ and
								\begin{align*}
									w^2 = uvuvuv = (stu)^6.
								\end{align*}
								Note that $Z_4 = \Span{z^6} = \Span{w^2}$ is the centre of $G_4$.
								It follows that
								\begin{align*}
									G_4 = Z_4\,  U\,  X_4,
								\end{align*}
								where
								\begin{align*}
									X_4 = (X_6 \cup z^3 X_6) \cap G_4 = \{1, sus, sususu \cdot s, sususu \cdot su\} = \{1, v, w, wu\}.
								\end{align*}

				$X_4$ is the vertex set of a spanning tree, if $w$ is added as a generator:
			\begin{figure}[htb]
				\begin{center}
					\begin{tikzpicture}
						\node (1) at (0,0) {$x_1$};
						\node (2) at (1,0.7) {$x_2$};
						\node (3) at (1,-0.7) {\,$x_3$};
						\node (4) at (2,-0.7) {\,\,$x_4$};

							\draw (1) -- node[above] {$v$} (2);
						\draw (1) -- node[below, yshift=-0.1cm, xshift=-0.1cm] {$w$} (3);
						\draw (3) -- node[above] {\,$u$} (4);

					\end{tikzpicture}
				\end{center}

			\end{figure}

\subsubsection{Octahedral family}
The octahedral family consists of the complex reflection groups $G_8$, $G_9$, $G_{10}$, $G_{11}$, $G_{12}$, $G_{13}$, $G_{14}$, and $G_{15}$. The maximal group in this family is $G_{11}$ and the rest of the groups can be seen as normal subgroups of it, as illustrated in Figure \ref{fig:octahedral}.

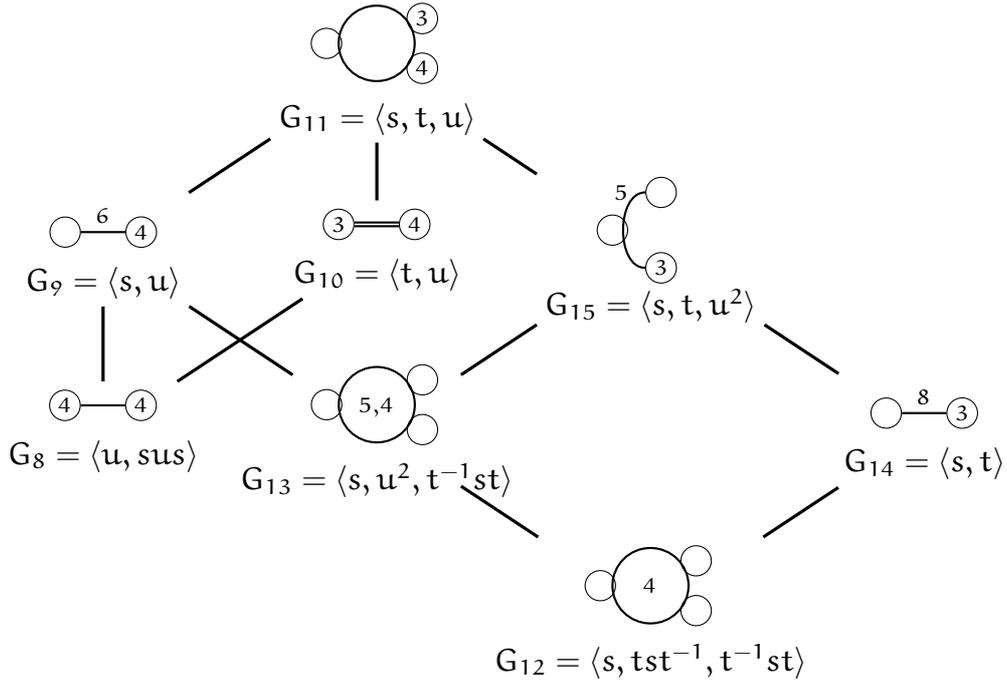
\begin{figure}[htb]
	\begin{center}
		\begin{tikzpicture}[xscale=1.8,yscale=1.2]
			\node[rectangle,inner sep=1mm] (G11) at (0,10) {
				\begin{tikzpicture}
					\draw[thick] (0,0) circle (5mm);
					\node[draw,circle] (1) at (-0.66,0) {};
					\node[draw,circle,inner sep=0.5mm] (2) at (0.6,0.33) {$\scriptstyle 3$};
					\node[draw,circle,inner sep=0.5mm] (3) at (0.6,-0.33) {$\scriptstyle 4$};
					\node[rectangle] at (0,-1) {$G_{11} = \Span{s,t,u}$};
				\end{tikzpicture}
			};
			\node[rectangle,inner sep=1mm] (G9) at (-2,8) {
				\begin{tikzpicture}[baseline=(2.text)]
					\node[draw,circle] (1) at (0,0) {};
					\node[draw,circle,inner sep=0.5mm] (2) at (1,0) {$\scriptstyle 4$};
					\draw[thick] (1) -- node[above] {$_6$} (2);
					\node[rectangle] at (0.5,-0.67) {$G_{9} = \Span{s,u}$};
				\end{tikzpicture}
			};
			\node[rectangle,inner sep=1mm] (G10) at (0,8) {
				\begin{tikzpicture}[baseline=(1.text)]
					\node[draw,circle,inner sep=0.5mm] (1) at (0,0) {$\scriptstyle 3$};
					\node[draw,circle,inner sep=0.5mm] (2) at (1,0) {$\scriptstyle 4$};
					\draw[thick,double] (1) -- (2);
					\node[rectangle] at (0.5,-0.67) {$G_{10} = \Span{t,u}$};
				\end{tikzpicture}
			};
			\node[rectangle,inner sep=1mm] (G15) at (2,8) {
				\begin{tikzpicture}[baseline=(t.text)]
					\node[draw,circle] (t) at (0,0) {};
					\node[draw,circle] (u) at (0.64,0.5) {};
					\node[draw,circle,inner sep=0.5mm] (1) at (0.64,-0.5) {$\scriptstyle 3$};
					\draw[thick] (u) node[left=0.3cm] {$_5$} to[out=180,in=180] (1);
					\node[rectangle] at (0.5,-1) {$G_{15} = \Span{s,t,u^2}$};
				\end{tikzpicture}
			};
			\node[rectangle,inner sep=1mm] (G8) at (-2,6) {
				\begin{tikzpicture}[baseline=(1.text)]
					\node[draw,circle,inner sep=0.5mm] (1) at (0,0) {$\scriptstyle 4$};
					\node[draw,circle,inner sep=0.5mm] (2) at (1,0) {$\scriptstyle 4$};
					\draw[thick] (1) -- (2);
					\node[rectangle] at (0.5,-0.67) {$G_{8} = \Span{u, sus}$};
				\end{tikzpicture}
			};
			\node[rectangle,inner sep=1mm] (G13) at (0,6) {
				\begin{tikzpicture}[baseline=(1.text)]
					\draw[thick] (0,0) circle (5mm) node (0) {$_{5,4}$};
					\node[draw,circle] (1) at (-0.66,0) {};
					\node[draw,circle] (2) at (0.6,0.33) {};
					\node[draw,circle] (3) at (0.6,-0.33) {};
					\node[rectangle] at (0,-1) {\hbox to 20mm{\hss$G_{13} = \Span{s,u^2,t^{-1}st}$\hss}};
				\end{tikzpicture}
			};
			\node[rectangle,inner sep=1mm] (G14) at (4,6) {
				\begin{tikzpicture}[baseline=(2.text)]
					\node[draw,circle] (1) at (0,0) {};
					\node[draw,circle,inner sep=0.5mm] (2) at (1,0) {$\scriptstyle 3$};
					\draw[thick] (1) -- node[above] {$_8$} (2);
					\node[rectangle] at (0.5,-0.67) {$G_{14} = \Span{s, t}$};
				\end{tikzpicture}
			};
			\node[rectangle,inner sep=1mm] (G12) at (2,4) {
				\begin{tikzpicture}[baseline=(1.text)]
					\draw[thick] (0,0) circle (5mm) node (0) {$_4$};
					\node[draw,circle] (1) at (-0.66,0) {};
					\node[draw,circle] (2) at (0.6,0.33) {};
					\node[draw,circle] (3) at (0.6,-0.33) {};
					\node[rectangle] at (0,-1) {$G_{12} = \Span{s, tst^{-1}, t^{-1}st}$};
				\end{tikzpicture}
			};

			\draw[very thick] (G9) -- (G11);
			\draw[very thick] (G10) -- (G11);
			\draw[very thick] (G15) -- (G11);
			\draw[very thick] (G8) -- (G9);
			\draw[very thick] (G8) -- (G10);
			\draw[very thick] (G13) -- (G9);
			\draw[very thick] (G13) -- (G15);
			\draw[very thick] (G14) -- (G15);
			\draw[very thick] (G12) -- (G13);
			\draw[very thick] (G12) -- (G14);
		\end{tikzpicture}
	\end{center} 
\caption{Octahedral Family.}
\label{fig:octahedral}
\end{figure}

The maximal group $G_{11}$ of the octahedral family has the presentation
	\begin{align*}
		G_{11} = \Span{s, t, u \mid s^2 = t^3 = u^4 = 1,\,
			stu = tus = ust}\text.
	\end{align*}
	We set again $z: = stu$.  Then
	\begin{align*}
		Z_{11} = \Span{z} = \{1, z,\dots, z^{23}\}
	\end{align*}
	is the centre of $G_{11}$.  Also set
	\begin{align*}
		U = \Span{u} = \{1, u, u^2, u^3\}\text,
	\end{align*}
	the parabolic subgroup of $G_{11}$ generated by $u$. This time we don't use the same parabolic group for all cases. More precisely, we have $G_j'=U$ for $j\in \{8,\dots, 15\}\setminus\{12,13,14,15\}$, since the generator $u$ does not appear in the presentation of the groups $G_{12}$, $G_{13}$, $G_{14}$ and $G_{15}$.
	\smallbreak\smallbreak\smallbreak  \noindent
\textbf{The group} $\mathbf{G_9.}$
	$$G_9 = \Span{s, u} \leq G_{11}.$$  Using $t^3 = 1$ it follows that
	\begin{align*}
		z^3 = (stu)^3 = sususu = ususus\text.
	\end{align*}
	Note that $Z_9 = \Span{z^3} = \{1, z^3, z^6, \dots, z^{21}\}$ is the centre of $G_9$.  We choose
	\begin{align*}
		X_9 = \{1, s, su, sus, su^2, su^2s\}\text.
	\end{align*}
	Then
	\begin{align*}
		G_9 = Z_9 U X_9 = \{yvx: y \in Z_9,\, v \in U,\, x \in X_9\}.
	\end{align*}
$X_9$ is the vertex set of the following spanning tree:
\begin{figure}[htb]
	\begin{center}
		\begin{tikzpicture}
			\node (1) at (0,0) {$x_1$};
			\node (2) at (1,0) {$x_2$};
			\node (3) at (2,0) {$x_3$};
			\node (4) at (3,0.5) {$x_4$};
			\node (5) at (3,-0.5) {\,$x_5$};
			\node (6) at (4,-0.5) {$x_6$};

			\draw (1) -- node[above] {$s$} (2);
			\draw (2) -- node[above] {$u$} (3);
			\draw (3) -- node[above] {$s$} (4);
			\draw (3) -- node[above, yshift=-0.4cm] {$u$} (5);
			\draw (5) -- node[above] {$s$} (6);
		\end{tikzpicture}
	\end{center} 
\end{figure}
		\smallbreak\smallbreak\smallbreak  \noindent
	\textbf{The group} $\mathbf{G_{11}.}$
	$$G_{11} = \Span{s, t, u}.$$  The set $\{1, z, z^2\}$
	is a transversal of the cosets of $G_9$ in $G_{11}$, i.e.,
	$G_{11} = G_9 \cup z G_9 \cup z^2 G_9$.  From $Z_{11} = Z_9 \cup z Z_9 \cup z^2 Z_9$ it follows that
	\begin{align*}
		G_{11} = Z_{11} U X_{11}\text,
	\end{align*}
	where $X_{11} = X_9 = \{1, s, su, sus, su^2, su^2s\}$.
		\smallbreak\smallbreak\smallbreak  \noindent
	\textbf{The group} $\mathbf{G_{10}.}$
	$$G_{10} = \Span{t, u} \leq G_{11}.$$ Using $s^2 = 1$, it follows that $sus = tut^{-1}$ and
	\begin{align*}
		z^2 = (stu)^2 = tutu = utut\text.
	\end{align*}
	Note that $Z_{10} = \Span{z^2} = \{1, z^2, z^4, \dots, z^{22}\}$ is the centre of $G_{10}$.  It follows that
	\begin{align*}
		G_{10} = Z_{10} U X_{10}\text,
	\end{align*}
	where
	\begin{align*}
		X_{10} = (X_{11} \cup z X_{11}) \cap G_{10}
		&= \{1, sus, su^2s, stu \cdot s, stu \cdot su, stu \cdot su^2 \}\\\notag
		&= \{1, tut^{-1}, tu^2t^{-1}, tu, tu^2, tu^3 \}\text.
	\end{align*}

$X_{10}$ is the vertex set of a spanning tree if $tu$ and $t^{-1}$ are added as generators.
\begin{figure}[htb]
	\begin{center}

		\begin{tikzpicture}
			\node (1) at (0,0) {$x_1$};
			\node (4) at (1,0) {$x_4$};
			\node (2) at (2,0) {$x_2$};
			\node (5) at (3,0) {$x_5$};
			\node (3) at (4,0) {$x_3$};
			\node (6) at (5,0) {$x_6$};

			\draw (1) -- node[above] {$tu$} (4);
			\draw (4) -- node[above] {$t^{-1}$} (2);
			\draw (2) -- node[above] {$tu$} (5);
			\draw (5) -- node[above] {$t^{-1}$} (3);
			\draw (3) -- node[above] {$tu$} (6);
		\end{tikzpicture}
	\end{center} 
\end{figure}
\smallbreak\smallbreak\smallbreak  \noindent
\textbf{The group} $\mathbf{G_8.}$
	$$G_8 = \Span{u, v} \leq G_{11},$$ where $v = sus = tut^{-1}$.
	Set $w = z^3 s$.  Using $s^2 = t^3 = 1$, it follows that
	$w = (stu)^3s = uvu = vuv$.  Moreover, $s^{-1}ws =w$, $s^{-1}vs =u$ and
	\begin{align*}
		w^2 = uvuvuv = vuvuvu = (stu)^6\text.
	\end{align*}
	Note that $Z_8 = \Span{z^6} = \Span{w^2}$ is the centre of $G_8$.  It follows that
	\begin{align*}
		G_8 = Z_8 U X_8\text,
	\end{align*}
	where
	\begin{align*}
		X_8 = (X_9 \cup z^3 X_9) \cap G_8
		&= \{1, sus, su^2s,
		sususu \cdot s,
		sususu \cdot su,
		sususu \cdot su^2
		\}\\\notag
		&= \{1, v, v^2, w, wu, wu^2\}\text.
	\end{align*}
$X_8$ is the vertex set of a spanning tree if $w$ is added as a generator.

\begin{figure}[htb]
	\begin{center}
		\begin{tikzpicture}
			\node (1) at (0,0) {$x_1$};
			\node (2) at (1,0.5) {$x_2$};
			\node (3) at (2,0.5) {$x_3$};
			\node (4) at (1,-0.5) {\,$x_4$};
			\node (5) at (2,-0.5) {$x_5$};
			\node (6) at (3,-0.5) {$x_6$};

			\draw (1) -- node[above] {$v$} (2);
			\draw (2) -- node[above] {$v$} (3);
			\draw (1) -- node[above, yshift=-0.44cm] {$w$} (4);
			\draw (4) -- node[above] {$u$} (5);
			\draw (5) -- node[above] {$u$} (6);
		\end{tikzpicture}
	\end{center} 
\end{figure}
		\smallbreak\smallbreak\smallbreak  \noindent
	\textbf{The group} $\mathbf{G_{15}.}$
	$$G_{15} = \Span{s, t, u^2} \leq G_{11}.$$  It follows that $u^2st = st u^2$ and
	\begin{align*}
		z^2 = (stu)^2 = tu^2sts = tstu^2s = u^2stst = stu^2st = ststu^2\text.
	\end{align*}
	Note that $Z_{15} = \Span{z^2}$ is the centre of $G_{15}$.  Also note that
	$u^{-1} s u = s t s t^{-1} s^{-1}$.
	It follows that
	\begin{align*}
		G_{15} = Z_{15} \Span{u^2} X_{15}\text,
	\end{align*}
	where
	$$\begin{array}{lcl}
		X_{15}
		&=& (X_{11} \cup u^{-1}X_{11} \cup z X_{11} \cup u^{-1} z X_{11}) \cap G_{15}
		\\\notag
		& =& \{
		1, s, u^{-1}su, u^{-1}sus, su^2, su^2s,
		u^{-1}z, u^{-1}zs, zsu, zsus, u^{-1}zsu^2, u^{-1}zsu^2s
		\} \\\notag
		& \stackrel{u^{-1}z = st}{=}&\{
		1, s, stst^{-1}s^{-1}, stst^{-1}, su^2, su^2s,
		st, sts, ststst^{-1}s^{-1}, ststst^{-1}, stsu^2, stsu^2s
		\}\text.
	\end{array}$$
$X_{15}$ is the vertex of a spanning tree if  $st$, $s^{-1}$ and $t^{-1}$ are added as generators.
\begin{figure}[htb]
	\begin{center}
		\begin{tikzpicture}
			\node (1) at (0,0) {$x_1$};
			\node (2) at (1,0.5) {$x_2$};
			\node (3) at (4,-0.5) {$x_3$};
			\node (4) at (3,-0.5) {\,\,$x_4$};
			\node (5) at (2,1) {\,$x_5$};
			\node (6) at (3,1.5) {$x_6$};
			\node (7) at (1,-0.5) {\,\,$x_7$};
			\node (8) at (2,-1) {\,\,$x_8$};
			\node (9) at (3,0.5) {\,\,$x_9$};
			\node (10) at (4,0.5) {$x_{10}$};
			\node (11) at (3,-1.5) {\,\,$x_{11}$};
			\node (12) at (4,-1.5) {\,$x_{12}$};

			\draw (1) -- node[above] {$s$} (2);
			\draw (1) -- node[above,yshift=-0.47cm]{$st$}	(7);
				\draw (2) -- node[above] {$u^2$} (5);
			\draw (7) -- node[above,yshift=-0.47cm] {$s$} (8);
			\draw (5) -- node[above] {$s$} (6);
			\draw (5) -- node[above,yshift=-0.5cm, xshift=-0.1cm] {$st$} (9);
			\draw (8) -- node[above] {$t^{-1}$} (4);
			\draw (8) -- node[above,xshift=-0.1cm, yshift=-0.6cm] {$u^2$} (11);
			\draw (9) -- node[above] {$s$} (10);
			\draw (4) -- node[above,xshift=0.1cm] {$s^{-1}$} (3);
			\draw (11) -- node[above] {$s$} (12);
		\end{tikzpicture}
	\end{center} 
\end{figure}
\smallbreak\smallbreak\smallbreak  \noindent
\textbf{The group} $\mathbf{G_{13}.}$
	$$G_{13} = \Span{s, u^2, x} \leq G_{11},$$ where
	$x = t^{-1}s t = u s u^{-1}$.
	Using $t^3 = 1$, set $q:= u^{-1}z^3 = s x u^2 s = x u^2 s x$.
	It follows that
	$qx = sq$, and $u^2 q = q u^2$, and that
	\begin{align*}
		u^2 q^2 = z^6 = (stu)^6 =  (s x u^2)^3 = (x u^2 s)^3\text.
	\end{align*}
	Note that $Z_{13} = \Span{z^6}$ is the centre of $G_{13}$.  It follows that
	\begin{align*}
		G_{13} = Z_{13} \Span{u^2} X_{13}\text,
	\end{align*}
	where
	\begin{align*}
		X_{13} &= (X_{11} \cup u X_{11} \cup z^3 X_{11} \cup u^{-1} z^3 X_{11}) \cap G_{13} \\
		&= \{
		1, s, usu, usus, su^2, su^2s,
		u^{-1}z^3, u^{-1}z^3s, z^3su, z^3sus, u^{-1}z^3su^2, u^{-1}z^3su^2s
		\} \\
		&= \{
		1, s, qx^{-1}s^{-1}, qx^{-1}, su^2, su^2s,
		q, qs, su^2q, su^2qs, qsu^2, qsu^2s
		\}\text.
	\end{align*}
$X_{13}$ is the vertex set of a spanning tree if $q$, $s^{-1}$ and $x^{-1}$ are added as generators.
	\begin{figure}[htb]
		\begin{center}
			\begin{tikzpicture}
				\node (1) at (0,0) {$x_1$};
				\node (2) at (1,0.5) {$x_2$};
				\node (3) at (3,-0.5) {$x_3$};
				\node (4) at (2,0) {\,\,$x_4$};
				\node (5) at (2,1) {$x_5$};
				\node (6) at (3,1.5) {$x_6$};
				\node (7) at (1,-0.5) {\,\,$x_7$};
				\node (8) at (2,-1) {\,\,$x_8$};
				\node (9) at (3,0.5) {$x_9$};
				\node (10) at (4,0.5) {$x_{10}$};
				\node (11) at (3,-1.5) {\,\,$x_{11}$};
				\node (12) at (4,-1.5) {$x_{12}$};

				\draw (1) -- node[above] {$s$} (2);
				\draw (1) -- node[below] {$q$} (7);
				\draw (2) -- node[above] {$u^2$} (5);
				\draw (7) -- node[below] {$s$} (8);
				\draw (5) -- node[above] {$s$} (6);
				\draw (5) -- node[above,pos=0.7] {$q$} (9);
				\draw (7) -- node[above, yshift=-0.1cm] {$x^{-1}$} (4);
				\draw (8) -- node[below, xshift=-0.2cm] {$u^2$} (11);
				\draw (9) -- node[above] {$s$} (10);
				\draw (4) -- node[above,xshift=0.25cm] {$s^{-1}$} (3);
				\draw (11) -- node[above] {$s$} (12);
			\end{tikzpicture}
		\end{center} 
	\end{figure}

	\smallbreak\smallbreak\smallbreak  \noindent
	\textbf{The group} $\mathbf{G_{14}.}$
	$$G_{14} = \Span{s, t} \leq G_{11}.$$  Using $u^4 = 1$ it follows that
	\begin{align*}
		z^4 = (stu)^4 = stststst = tstststs
	\end{align*}
	Note that $Z_{14} = \Span{z^4}$ is the centre of $G_{14}$.
	Then, with respect to the parabolic subgroup $\Span{t} = \{1, t, t^2\}$, we have
	\begin{align*}
		G_{14} = Z_{14} \Span{t} X_{14}\text,
	\end{align*}
	 where
	\begin{align*}
		X_{14} = \{
		1, s, st, sts, stst, ststs, stst^2, stst^2s
		\}\text.
	\end{align*}

$X_{14}$ is the vertex set of a spanning tree:

	\begin{figure}[htb]
		\begin{center}
			\begin{tikzpicture}
				\node (1) at (0,0) {$x_1$};
				\node (2) at (1,0) {$x_2$};
				\node (3) at (2,0) {$x_3$};
				\node (4) at (3,0) {$x_4$};
				\node (5) at (4,0) {$x_5$};
				\node (6) at (5,0.5) {$x_6$};
				\node (7) at (5,-0.5) {\,$x_7$};
				\node (8) at (6,-0.5) {$x_8$};

					\draw (1) -- node[above] {$s$} (2);
			\draw (2) -- node[above] {$t$} (3);
				\draw (3) -- node[above] {$s$} (4);
				\draw (4) -- node[above] {$t$} (5);
				\draw (5) -- node[above] {$s$} (6);
					\draw (5) -- node[below] {$t$} (7);
					\draw (7) -- node[above] {$s$} (8);
			\end{tikzpicture}
		\end{center} 
	\end{figure}


	\smallbreak\smallbreak\smallbreak  \noindent
	\textbf{The group} $\mathbf{G_{12}.}$

	$$G_{12} = \Span{s, g, h} \leq G_{11},$$
	where
	$g = tst^{-1}$ and $h = t^{-1}st$.  Using $t^3 = 1$, set
	$y = sghs = ghsg = hsgh$.  Then $y^3 = (sgh)^4$.
	Note that $Z_{12} = \Span{z^{12}} = \Span{y^3}$ is the centre of $G_{12}$.
	We choose $Y = \{1, g, gh, ghg\}$. 	Then, with respect to the parabolic subgroup $\Span{s}$, we have
	\begin{align*}
		G_{12} = Z_{12} \Span{s} X_{12}\text,
	\end{align*}
	where
	\begin{align*}
		X_{12} &= Y \cup Y y \cup Y y^2 \\\notag
		&= \{
		1, g, gh, ghg, \\\notag
		& \phantom{=\{\}}
		ghsg, ghsgh, ghsghs, ghsghsh, \\\notag
		& \phantom{=\{\}}
		ghsghsgh, ghsghsghs, ghsghsghsg, ghsghsghsgs
		\}\text.
	\end{align*}

	$X_{12}$ is the vertex set of a spanning tree if $y$ is added as a generator:

				\begin{figure}[htb]
				\begin{center}
					\begin{tikzpicture}
						\node (1) at (0,0) {$x_1$};
						\node (5) at (1,-0.8) {$x_5$};
						\node (9) at (2,-0.8) {$x_9$};
						\node (2) at (1,0.5) {$x_2$};
						\node (3) at (2,1) {$x_3$};
						\node (6) at (2,-0.1) {$x_6$};
						\node(10) at (3,-0.1) {$x_{10}$};
						\node (4) at (3,1.5) {$x_4$};
						\node (8) at (4,1.5) {$x_8$};
						\node (12) at (5,1.5) {$x_{12}$};
						\node (7) at (3,0.6) {$x_7$};
						\node(11) at (4,0.6){$x_{11}$};

							\draw (1) -- node[above] {$g$} (2);
								\draw (2) -- node[above] {$h$} (3);
									\draw (3) -- node[above] {$g$} (4);
					\draw (4) -- node[above] {$y$} (8);
					\draw (8) -- node[above] {$y$} (12);
						\draw (1) -- node[below] {$y$} (5);
							\draw (5) -- node[below] {$y$} (9);
								\draw (2) -- node[below] {$y$} (6);
									\draw (6) -- node[below] {$y$} (10);
		\draw (3) -- node[below] {$y$} (7);
			\draw (7) -- node[below] {$y$} (11);
					\end{tikzpicture}
				\end{center} 
			\end{figure}

\subsection{$z$-bases and coset tables}We will now use the factorization $G_j=Z_jG_j'X_j$, $j=4,\dots, 15$, as an inspiration to construct a $z_j$-basis for each $H(G_j)$.
		We recall that $b\mapsto T_b$ denotes the restriction of the natural surjection $R(W)[B(W)]\rightarrow H(W)$ to $B(W)$, and that $\mathbold{s}$, $\mathbold{t}$, $\mathbold{u}$ are the braided reflections corresponding to $s, t, u$, respectively.	We denote with $\mathbold{G}_j'$, $\mathbold{Z}_j$, and $\mathbold{X}_j$ the sets that correspond to the sets $G_j'$, $Z_j$, and  $X_j$ defined before, where we replace each  word of letters $s$, $t$, $u$  with the letters $T_{\mathbold{s}}$, $T_{\mathbold{t}}$ and $T_{\mathbold{u}}$, respectively.

The goal of this section is to prove that $\mathcal{B}_j=\mathbold{Z}_j\mathbold{G}_j'\mathbold{X}_j$ is a basis of $H(G_j)$. By construction, $\mathcal{B}_j$ is then a  $z_j$-basis of $H(G_j)$.

As we have seen in the previous section, the parabolic subgroup $G_j'$ is generated by a generator of $G_j$, which we denote by $g_0$.
We denote by $H'(G_j)$ the subalgebra of $H(G_j)$ generated by $T_{\mathbold{g_0}}$. Our goal is to prove that  $\mathcal{B}'_j:=\mathbold{Z}_j\mathbold{X}_j$ is a basis of $H(G_j)$ as $H'(G_j)$-module (and, therefore, $\mathcal{B}_j$ is a basis of $H(G_j)$ as $R(G_j)$-module).

Since $H(G_j)$ is a free $H'(G_j)$-module of dimension $d_j:=|G_j/G_j'|$ (see, for example, \cite{CC}), we only have to prove that $\mathcal{B}'_j$ is a spanning set for $H(G_j)$. By construction, we always have $1_{H(G_j)}\in \mathcal{B}'_j$ and, hence, it is enough to prove that   $\mathcal{B}'_j\cdot T_{\mathbold{g}}$ is a linear combination of elements in $\mathcal{B}'_j$, for each generator $g$ of $G_j$.

For this purpose, we construct a \emph{coset table}, as we also did in \cite{ChP} where we list this linear combination  for the elements $b_i.T_{\mathbold{g}}$, where $\mathcal{B}'_j=\{b_1,\dots, b_{d_j}\}$.
Each entry $b_i.T_{\mathbold{g}} = \sum_l \gamma_l b_l$ is a consequence of the defining relations of $H(G_j)$.

From the completed coset table we obtain  a faithful matrix representation of $H(G_j)$, which allows us to automate calculations inside the Hecke algebra. We refer to \cite[Proposition 2.5]{ChP}, where the reader can find an example of such representation and how it can be used in order to express every
element of $H(G_j)$ as an $R(G_j)$-linear combination of elements in the basis $\mathcal{B}_j$.

Filling the coset table is a step-by-step process, where new entries
are derived from existing entries and the relations by trial and
error.  In this process it is convenient to fill and use as known
entries additional columns for the elements
$b_i.T_{\mathbold{g}}^{-1}$, $b_i.T_{\mathbold{z_j}}$,
$b_i.T_{\mathbold{z_j}}^{-1}$, and for the elements $b_i.T_{\mathbold{g'}}$ and $b_i.T_{\mathbold{g'}}^{-1}$, where $g'$ are the redundant generators used in the spanning trees in Section~\ref{sec:factorization}.

We give in detail the example of $G_6$, where the reader can see thoroughly our methodology.

\begin{ex}
$G_6 = \Span{s, u \mid s^2 = u^4 = 1, sususu = ususus}$. As we have seen in Section \ref{tet},
$z_6 = sususu = ususus$ with $z_6^4=1$,
$G_6' = \Span{u} = \{1, u, u^2, u^3\}$ and $X_6=\{1, s, su, sus\}$.

To make notation lighter, we denote again with $s$ and $u$  generators of the Hecke algebra $H(G_6)$ (instead of $T_{\mathbf{s}}$ and $T_{\mathbf{u}}$) and with $z_6$ the element $T_{\mathbf{z_6}}$.
		With this new notation, we have then
		$$\mathcal{B}'_6=\{1,z_6,z_6^2,z_6^3\}\,\{1, s, su, sus\}=\{b_1,\,b_2,\,\dots,b_{16}\},$$
		where the elements $b_i$ are given explicitly in the following coset table. Recall that the entries of the coset table are $H'(G_6)$-linear combinations, where $H'(G_6)$ denotes the subalgebra of $H(G_6)$, generated by $u$.
		\[
		\begin{array}{|l|cccccc|}
			\hline
			b_i,\,i=1,\dots,16 & b_i.s & b_i.u &  b_i.z_6 & b_i.s^{-1} & b_i.u^{-1} & b_i.z_6^{-1}\\
			\hline
			b_1 = 1  & \underline{b_2} & u \cdot b_1 & b_5 & b_1:s^{-1} & u^{-1} \cdot b_1 &  \\
			b_2 =  s  & b_2:s & \underline{b_3} & b_6 & \underline{b_1} & \textcolor{red}{b_2:u^{-1}} & \\
			b_3 =  su & \underline{b_4} & \textcolor{red}{b_3.s^{-1}u^{-1}s^{-1}u^{-1}s^{-1}z_6} & b_7 & b_3:s^{-1} & \underline{b_2} & \\
			b_4 = sus  & b_4:s & \textcolor{blue}{b_4.s^{-1}u^{-1}s^{-1}u^{-1}s^{-1}z_6}  & b_8 & \underline{b_3} & \textcolor{blue}{b_4:u^{-1}} & \\
			\hline
			b_5 = z_6  & \underline{b_6} & u \cdot b_5 & b_9 & b_5:s^{-1} & u^{-1} \cdot b_5 & b_1 \\
			b_6 = z_6 s  & b_6:s & \underline{b_7} & b_{10} & \underline{b_5} & \textcolor{blue}{b_6.z_6^{-1}susus} & b_2 \\
			b_7 = z_6su & \underline{b_8} & \textcolor{blue}{b_7:u} & b_{11} & b_7:s^{-1} & \underline{b_6} & b_3 \\
			b_8 = zsus  & b_8:s & \textcolor{blue}{b_8.s^{-1}u^{-1}s^{-1}u^{-1}s^{-1}z_6} & b_{12} & \underline{b_7} & \textcolor{blue}{b_8:u^{-1}} & b_4 \\
			\hline
			b_9 = z_6^2  & \underline{b_{10}} & u \cdot b_9 & b_{13} & b_9:s^{-1} & u^{-1} \cdot b_9 & b_5 \\
			b_{10} = z_6^2 s  & b_{10}:s & \underline{b_{11}} & b_{14} & \underline{b_9} & \textcolor{blue}{b_9.z_6^{-1}susus} & b_6 \\
			b_{11} = z_6^2 su & \underline{b_{12}} & \textcolor{blue}{b_{11}:u} & b_{15} & b_{11}:s^{-1} & \underline{b_{10}} & b_7\\
			b_{12} = z_6^2 sus  & b_{12}:s & \textcolor{blue}{b_{12}.s^{-1}u^{-1}s^{-1}u^{-1}s^{-1}z_6} & b_{16} & \underline{b_{11}} & \textcolor{blue}{b_{12}:u^{-1}} & b_8 \\
			\hline
			b_{13} = z_6^3  & \underline{b_{14}} & u \cdot b_{13} & & b_{13}:s^{-1} & u^{-1} \cdot b_{13} & b_9 \\
			b_{14} = z_6^3 s  & b_{14}:s & \underline{b_{15}} & & \underline{b_{13}} & \textcolor{blue}{b_{14}.z_6^{-1}susus} & b_{10} \\
			b_{15} = z_6^3 su & \underline{b_{16}} & \textcolor{blue}{b_{15}:u} & & b_{15}:s^{-1} & \underline{b_{14}} & b_{11}\\
			b_{16} = z_6^3 sus  & b_{16}:s & \textcolor{red}{b_{16}:u} & & \underline{b_{15}} & \textcolor{red}{b_{16}.z_6^{-1}susus} & b_{12} \\
			\hline
		\end{array}
		\]
		The coset table is completed as follows. Notice here that each step we describe needs the previous ones, in order to be completed.
		\begin{itemize}
			\item First, the black entries are completed, which are straightforward: The entries $\underline{b_i}$ are part of the spanning tree, the entries $b_i$, $u\cdot b_i$ and $u^{-1}\cdot b_i$ are obtained from the definition of $b_i$
			and the entries $b_i:s$ and $b_i:s^{-1}$ can be computed from other black entries in the table if we apply the positive
			Hecke relation \ref{ph} and the inverse
			Hecke relation \ref{invhecke}  to the algebra generator $s$, respectively: e.g. $b_1.s^{-1}=b_1.(a_{s,0}^{-1}s-a_{s,0}^{-1}a_{s,1})=a_{s,0}^{-1}\cdot b_1.s-a_{s,0}^{-1}a_{s,1}\cdot b_1=a_{s,0}^{-1}\cdot b_2-a_{s,0}^{-1}a_{s,1}\cdot b_1$. \\
			\item The \textcolor{blue}{blue} entries follow next. The entries $b_i:u$ and $b_i:u^{-1}$ can be computed again using the positive
			Hecke relation \ref{ph} and the inverse
			Hecke relation \ref{invhecke}  to the algebra generator $u$. The  entries $b_i.u$ and $b_i.u^{-1}$ are obtained as an application of the relation $z_6=u. susus$
                        and, hence, $b_i. u=b_i.s^{-1}u^{-1}s^{-1}u^{-1}s^{-1}z_6$ and $b_i. u^{-1}=b_i.z_6^{-1}susus$.
                        The coset table at this point contains all the information needed to express these products as a linear combination: e.g.,
                        $$\begin{array}{lcl}
                        	b_4.u
                        = (b_4.s^{-1})u^{-1}s^{-1}u^{-1}s^{-1}z_6
                        &=& (b_3.u^{-1})s^{-1}u^{-1}s^{-1}z_6\smallbreak\smallbreak\\
                        &=& (b_2.s^{-1})u^{-1}s^{-1}z_6\smallbreak\smallbreak\\
                        &=&( b_1.u^{-1})s^{-1}z_6\smallbreak\smallbreak\\
                        &=& u^{-1} \cdot (b_1.s^{-1})z_6\smallbreak\smallbreak\\

                        &=&       u^{-1} \cdot (a_{s,0}^{-1}b_2-a_{s,0}^{-1}a_{s,1}b_1)z_6\smallbreak\smallbreak\\&=&     a_{s,0}^{-1}u^{-1}\cdot b_2.z_6- a_{s,0}^{-1}a_{s,1}u^{-1}\cdot b_1.z_6\smallbreak\smallbreak\\
                        &=& a_{s,0}^{-1}u^{-1}\cdot b_6- a_{s,0}^{-1}a_{s,1}u^{-1}\cdot b_5.
                        \end{array}$$
Here, we used directly the black entry $b_1.s^{-1}$ we have calculated as an example before.\\
                      \item The \textcolor{red}{red} entries come next, completed similarly as the blue ones.\\
			\item At this point, the first two columns of the coset table are complete, which means that $\mathcal{B}'_6$ is an $H'(G_6)$-basis of $H(G_6)$. As a result, it is not necessary to fill the missing entries. One could do that, using  any relation of the form $z_6 = w$ and the filled entries of the coset table.
		\end{itemize}

%

										\qed
	\end{ex}

We finish this section by describing an extra property of the aforementioned $z_j$-basis $\mathcal{B}_j$:

We first consider the case of the tetrahedral family, i.e. $j=4,5,6,7$.  Since the BMR freeness conjecture holds, we have the following result \cite[Proposition 4.2 \& Table 4.6]{Ma2}: For each $j=4,5,6$, there is a specialisation $\theta_j: R(G_7)\rightarrow R(G_j)$, such that the Hecke algebra $H(G_j)$ is isomorphic to a subalgebra of the specialised Hecke algebra $H^j(G_7):=H(G_7)\otimes_{\theta_j}R(G_j)$. The specialisations $\theta_j$, $j=4,5,6$, are given explicitly in \cite[\S 4]{Ma2}.

	Recall that $\ell_j=|G_7:G_j|$ and that $z_j=z^{\ell_j}$, where $z$ is the generator of the centre of $G_7$ of order $12$.
	In the specialized algebra $H^j(G_7)$, we have
	that $T_{\mathbf{z}}^{\ell_j} = T_{\mathbf{z_j}}$ is an element of $H(G_j)$. Thus, as $H(G_j)$-algebra,
	\begin{equation}\label{sp}H^j(G_7) = H(G_j) + T_{\mathbf{z_j}} H(G_j) + \dots +
	T_{\mathbf{z_j}}^{\ell_j-1} H(G_j).
	\end{equation}

	We consider now the $z$-basis of $H(G_7)$, which is of the form $\mathcal{B}_7 =\{1, T_{\mathbf{z}},\dots,T_{\mathbf{z}}^{11}\}Y,$ for some set $Y$. Then,
	\begin{equation}\label{sp2}
	\mathcal{B}_j = \{1, T_{\mathbf{z}}^{\ell_j}, \dots, T_{\mathbf{z}}^{(k-1)\ell_j}\}  Y',	\end{equation}
	where $z_j^k=1$, and
	$Y' = \{1, T_{\mathbf{z}}^{\ell_j}, \dots, T_{\mathbf{z}}^{(k-1)\ell_j}\}  Y'  \cap H(G_j),$
		where, for each $y\in Y$, exactly one of the $T_{\mathbf{z}}^{\alpha} y$ lies in $Y'$.

	There is an analogous property for the octahedral family, i.e. $j=8,\dots, 15$, where the role of $G_7$ is played by the maximal group of this family, namely the group $G_{11}$.

\subsection{The Gram matrix}
	As we have seen so far, for each complex reflection group $G_j$, $j=4,\dots, 15$ we have constructed a $z_j$-basis $\mathcal{B}_j$. To make notation lighter, we will denote again with $z_j$ the algebra element $T_{\mathbf{z_j}}$.
	Recall that $m=|G_j/Z(G_j)|$. Let $y_i \in B(W)$, $i = 1 \dots, l$,  be such that $\mathcal{B}_j$ is of the form
	$$\mathcal{B}_j = \{z_j^k y_i\,:\, k = 0,\dots,m-1,\, i = 1,\dots,l\}.$$

	The goal of this section is to calculate the Gram matrix

	\begin{align*}
		A_j = (\tau_{\mathcal{B}_j}(z^{k_1} y_{i_1}\, z^{k_2} y_{i_2})) = (\tau_{\mathcal{B}_j}(z^{k_1 + k_2} y_{i_1} y_{i_2})),
	\end{align*}
which corresponds to the linear map $\tau_{\mathcal{B}_j}$ and prove it is symmetric.

\subsubsection{Calculation and symmetry} From the definition of the $z_j$-basis, it follows that
$A_j$
	is an $m \times m$-block matrix
	\begin{align*}
		A_j = \left(
		\begin{array}{cccc}
			A_j^0&A_j^1&\dots&A_j^{m-1}\\
			A_j^1&A_j^2&\dots&A_j^m \\
			\dots & \dots && \dots \\
			A_j^{m-1} & A_j^m & \dots & A_j^{2m-2}
		\end{array}
		\right),
	\end{align*}
	where for $k_1,k_2\in \{1,\dots, m-1\}$
	\begin{align*}
		A_j^{k_1+k_2}: = (\tau_{\mathcal{B}_j}((z^{k_1+k_2} y_{i_1} y_{i_2}))_{i_1,i_2=1}^l.
	\end{align*}

The way we have constructed the basis $\mathcal{B}_j$,  many products $y_{i_1}y_{i_2}$ are elements inside $\{1=y_1,\dots, y_l\}$. Therefore, the matrices $A_j^0, \dots, A_j^{m-1}$ contain many zeros and are relatively easy to compute.

For the remaining matrices $A_j^m, \dots, A_j^{2m-2}$ one could use the closed coset tables, in order to calculate each entry of the matrices, a method we have already seen in \cite{ChP}. This time  we use a recursion formula to speed up the procedure, as follows.

We first consider the elements $z_j^my_i$, $i=1,\dots, l$. These elements are not inside $\mathcal{B}_j$, but we could write them as a linear combination of elements in $\mathcal{B}_j$:
	\begin{align*}
	z_j^m y_i
	= \sum_{p=0}^{m-1} \sum_{q=1}^{l} \zeta^p_{iq} z_j^p y_q,\,\,\,\, i=1,\dots, l.
\end{align*}

We use now the coefficients  $\zeta^p_{iq}\in R(G_j)$ to construct the following $l \times lm$-matrix:
\begin{align*}
	Z: = \left(
	\begin{array}{cccc}
		Z^0 & Z^1 & \dots & Z^{m-1}
	\end{array}
	\right) = (\zeta^p_{iq})
\end{align*}
\begin{lem}\label{gram}
	Let $\alpha\in \{0,\dots, m-2\}$. We have the following recursion formula
	\begin{align*}
		A_j^{\alpha+m} = Z \cdot \left(
		\begin{array}{cccc}
			A_j^{\alpha} & A_j^{\alpha+1} & \dots & A_j^{\alpha+m-1}
		\end{array}
		\right)^T.
	\end{align*}
\end{lem}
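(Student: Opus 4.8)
The plan is to prove the identity entrywise and then read it off as a block-matrix product. Fix $\alpha\in\{0,\dots,m-2\}$ and indices $i_1,i_2\in\{1,\dots,l\}$, and compute the $(i_1,i_2)$ entry of $A_j^{\alpha+m}$. The only tools needed are the $R(G_j)$-linearity of $\tau_{\mathcal{B}_j}$, the centrality of $z_j=T_{\mathbf{z_j}}$ in $H(G_j)$ (which holds since $\mathbf{z_j}$ lies in $Z(B(G_j))$), and the defining relation of the matrix $Z$, namely $z_j^m y_{i_1} = \sum_{p=0}^{m-1}\sum_{q=1}^{l}\zeta^p_{i_1 q}\,z_j^p y_q$.

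First I would write $z_j^{\alpha+m}y_{i_1} = z_j^{\alpha}(z_j^m y_{i_1})$ and substitute the expansion of $z_j^m y_{i_1}$; centrality keeps every power of $z_j$ on the left, so $z_j^{\alpha+m}y_{i_1} = \sum_{p,q}\zeta^p_{i_1 q}\,z_j^{\alpha+p} y_q$. Multiplying on the right by $y_{i_2}$ and applying $\tau_{\mathcal{B}_j}$ linearly yields
\[
(A_j^{\alpha+m})_{i_1 i_2} = \tau_{\mathcal{B}_j}\!\bigl(z_j^{\alpha+m} y_{i_1} y_{i_2}\bigr) = \sum_{p=0}^{m-1}\sum_{q=1}^{l}\zeta^p_{i_1 q}\,\tau_{\mathcal{B}_j}\!\bigl(z_j^{\alpha+p} y_q y_{i_2}\bigr) = \sum_{p=0}^{m-1}\sum_{q=1}^{l}\zeta^p_{i_1 q}\,(A_j^{\alpha+p})_{q\, i_2}.
\]
Since $\alpha\le m-2$, every exponent $\alpha+p$ stays in $\{0,\dots,2m-2\}$, so all blocks occurring on the right are already defined and no power of $z_j$ beyond $z_j^{2m-2}$ appears.

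It then remains to identify this sum with the $(i_1,i_2)$ entry of $Z\,(A_j^\alpha\ \cdots\ A_j^{\alpha+m-1})^{T}$, where $Z=(Z^0\ \cdots\ Z^{m-1})$ and $Z^p=(\zeta^p_{iq})_{i,q}$. Expanding the block product pairs each $\zeta^p_{i_1 q}$ with an entry of $A_j^{\alpha+p}$, and the sole subtlety is the transpose: ordinary block transposition contributes the $(i_2,q)$ entry of $A_j^{\alpha+p}$, whereas the computation above produces the $(q,i_2)$ entry. The two readings agree exactly when each block is symmetric, i.e. $\tau_{\mathcal{B}_j}(z_j^{\beta}y_{i_1}y_{i_2}) = \tau_{\mathcal{B}_j}(z_j^{\beta}y_{i_2}y_{i_1})$; this is formally cyclicity of $\tau_{\mathcal{B}_j}$ together with centrality of $z_j$, but since cyclicity of $\tau_{\mathcal{B}_j}$ is itself part of what the section is establishing, I would instead invoke the symmetry of the base blocks $A_j^0,\dots,A_j^{m-1}$ that is verified directly here and treat it as a genuine hypothesis rather than assume it silently. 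I expect no real difficulty beyond this transpose/symmetry bookkeeping: the substance of the lemma is the single application of linearity and centrality in the display above, and the recursion then lets one read off $A_j^m,\dots,A_j^{2m-2}$ successively from the base blocks $A_j^0,\dots,A_j^{m-1}$.
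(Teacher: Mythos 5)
Your computation is exactly the paper's proof: expand $z_j^m y_{i_1}$ via the definition of $Z$, use centrality of $z_j$ and linearity of $\tau_{\mathcal{B}_j}$, and conclude $(A_j^{\alpha+m})_{i_1,i_2} = \sum_{p}\bigl(Z^p A_j^{\alpha+p}\bigr)_{i_1,i_2}$. The only divergence is your added symmetry hypothesis: the paper's proof ends at precisely this formula, i.e.\ it reads the superscript $T$ as mere block rearrangement (the individual blocks $A_j^{\alpha+p}$ are not transposed), so the lemma needs no symmetry assumption at all --- symmetry enters only in the subsequent corollary, where it is propagated inductively from the base blocks, and folding it into the lemma as you do (citing only $A_j^0,\dots,A_j^{m-1}$, which for $\alpha\geq 1$ would not even cover all blocks on the right) slightly muddles that division of labour.
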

	\begin{proof}
		$\begin{array}[t]{lcl}
			(A_j^{\alpha+m})_{i_1,i_2} = \tau_{\mathcal{B}_j}(z_j^{\alpha+m} y_{i_1} y_{i_2})&=&\tau_{\mathcal{B}_j}(z_j^my_{i_1}\cdot z_j^{\alpha}y_{i_2})\smallbreak\smallbreak\\
			&=& \tau_{\mathcal{B}_j}\left((\sum\limits_{p=0}^{m-1} \sum\limits_{q=1}^l \zeta^p_{i_1q}z_j^p y_q) z_j^{\alpha} y_{i_2}\right)\smallbreak\smallbreak\\
			&= &\sum\limits_{p=0}^{m-1} \sum\limits_{q=1}^l \zeta^p_{i_1q}\, \tau_{\mathcal{B}_j}(z_j^{\alpha+p} y_{q} y_{i_2})\smallbreak\smallbreak\\
			&=& \sum\limits_{p=0}^{m-1} \sum\limits_{q=1}^l Z^p_{i_1,q}  (A_j^{\alpha+p})_{q,i_2}\smallbreak\smallbreak \\
			&= &\sum\limits_{p=0}^{m-1} (Z^p \,A_j^{\alpha+p})_{i_1,i_2}.
		\end{array}$

	\end{proof}
Using the inductive construction of the block form of the Gram matrix, one could prove the following result:
\begin{cor}If the matrices $A_j^0,\dots, A_j^{m-1}$ are symmetric, then the matrix $A_j$ is symmetric.
	\end{cor}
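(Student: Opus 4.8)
The plan is to reduce the symmetry of the whole matrix $A_j$ to the symmetry of its individual blocks $A_j^n$, and then to propagate symmetry from the blocks $A_j^0,\dots,A_j^{m-1}$ to the remaining blocks by pairing the recursion of Lemma~\ref{gram} with its ``mirror image''.

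First I would record the purely formal observation that $A_j$ is a block Hankel matrix, i.e.\ its $(k_1,k_2)$-block equals $A_j^{k_1+k_2}$ and depends only on the sum $k_1+k_2$. Indexing rows and columns by pairs $(k,i)$ with $k\in\{0,\dots,m-1\}$ and $i\in\{1,\dots,l\}$, the entry of $A_j$ in position $\big((k_1,i_1),(k_2,i_2)\big)$ is $\tau_{\mathcal{B}_j}(z_j^{k_1+k_2}y_{i_1}y_{i_2})=(A_j^{k_1+k_2})_{i_1,i_2}$, whereas the corresponding entry of $A_j^{T}$ is $(A_j^{k_1+k_2})_{i_2,i_1}$. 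Hence $A_j=A_j^{T}$ precisely when every block $A_j^{n}$, $n=0,\dots,2m-2$, is symmetric. The hypothesis provides this for $n\le m-1$, so the task reduces to showing that $A_j^{m},\dots,A_j^{2m-2}$ are symmetric.

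I would then argue by induction on $n$ that each $A_j^{n}$ is symmetric, the case $n\le m-1$ being the hypothesis. For $n=\alpha+m$ with $\alpha\in\{0,\dots,m-2\}$, the key point is that Lemma~\ref{gram} was derived by expanding $z_j^{m}y_{i_1}$ \emph{on the left}, producing $A_j^{\alpha+m}=\sum_{p=0}^{m-1}Z^{p}A_j^{\alpha+p}$. Because $z_j$ is central in $H(G_j)$, the very same computation carried out by expanding $z_j^{m}y_{i_2}$ \emph{on the right} yields the mirror identity $A_j^{\alpha+m}=\sum_{p=0}^{m-1}A_j^{\alpha+p}(Z^{p})^{T}$; both identities hold unconditionally. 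All indices occurring here satisfy $\alpha+p\le n-1$, so the blocks $A_j^{\alpha},\dots,A_j^{\alpha+m-1}$ are symmetric by the inductive hypothesis. Transposing the first identity and using this symmetry gives
\[
(A_j^{\alpha+m})^{T}=\sum_{p=0}^{m-1}(A_j^{\alpha+p})^{T}(Z^{p})^{T}=\sum_{p=0}^{m-1}A_j^{\alpha+p}(Z^{p})^{T}=A_j^{\alpha+m},
\]
the last equality being the mirror identity. Thus $A_j^{\alpha+m}$ is symmetric, the induction closes, and combined with the block Hankel reduction this proves that $A_j$ is symmetric.

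The only delicate step I foresee is the derivation of the mirror recursion: one must verify that expanding the central element $z_j^{m}$ against $y_{i_2}$ on the right, rather than against $y_{i_1}$ on the left as in Lemma~\ref{gram}, reproduces exactly $\sum_{p}A_j^{\alpha+p}(Z^{p})^{T}$, using only the centrality of $z_j$ and the definition of $\tau_{\mathcal{B}_j}$. It is essential that this step does \emph{not} secretly invoke the relation $\tau_{\mathcal{B}_j}(hh')=\tau_{\mathcal{B}_j}(h'h)$, since that symmetry of the trace is precisely the conclusion we are after; everything else is the formal transposition displayed above.
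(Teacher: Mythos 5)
Your proposal is correct and takes essentially the same route as the paper: the paper's proof also pairs the left-expansion recursion of Lemma~\ref{gram} with the mirror identity obtained by expanding $z_j^m$ against the second factor on the right (using only centrality of $z_j$ and linearity of $\tau_{\mathcal{B}_j}$, not the trace property), and then inducts block by block exactly as you do. The only differences are presentational: the paper works entrywise where you use matrix transposes, and it leaves the block-Hankel reduction implicit where you state it explicitly.
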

	\begin{proof}
		Let $\alpha \in\{0,\dots, m-2\}$. As we have seen in the proof of Lemma \ref{gram}, $$(A_j^{\alpha+m})_{i_1,i_2}=\sum_{p=0}^{m-1} \sum_{q=1}^l Z^p_{i_1,q}  (A_j^{\alpha+p})_{q,i_2}.$$
		On the other hand, we have:
		$$\begin{array}{lcl}
			(A_j^{\alpha+m})_{i_2,i_1} = \tau(z_j^{\alpha+m} y_{i_2} y_{i_1})&=&\tau_{\mathcal{B}_j}(z_j^{\alpha}y_{i_2}\cdot z_j^my_{i_1})\smallbreak\smallbreak\\
			&=& \tau_{\mathcal{B}_j}\left(z_j^{\alpha} y_{i_2}\cdot \sum\limits_{p=0}^{m-1} \sum\limits_{q=1}^l \zeta^p_{i_1q}z_j^p y_q \right)\smallbreak\smallbreak \\
			&=& \sum\limits_{p=0}^{m-1} \sum\limits_{q=1}^l \zeta^p_{i_1q}\, \tau_{\mathcal{B}_j}(z_j^{\alpha+p} y_{i_2} y_q)\smallbreak\smallbreak\\
			&=& \sum\limits_{p=0}^{m-1} \sum\limits_{q=1}^l Z^p_{i_1,q}  (A_j^{\alpha+p})_{i_2,q}.
		\end{array}$$
	Therefore, if  $A_j^{\alpha+p}$ is symmetric for every $p\in\{0,\dots, m-1\}$, the matrix $A_j^{\alpha+m}$ is also symmetric. We use this fact to  prove that $A_j^{\alpha+m}$ is symmetric, for every $\alpha\in \{0,\dots, m-2\}$ (and, hence, the Gram matrix $A$ is also symmetric).
	\begin{itemize}
		\item For $\alpha=0$: $A_j^m$ is symmetric, since $A_j^0,\dots, A_j^{m-1}$ are symmetric.
		\item For $\alpha=1$: $A_j^{1+m}$ is symmetric, since $A_j^{1}, \dots, A_j^m$ are symmetric.
		\item For $\alpha=2$: $A_j^{2+m}$ is symmetric, since $A_j^{2}, \dots, A_j^{m},\, A_j^{m+1}$ are symmetric.\\
		\phantom{For $\alpha=2$: $A_j^{2+m}$ is symmetric,}
	 $\vdots$
	 \item For $\alpha=m-2$: $A_j^{2m-2}$ is symmetric, since $A_j^{m-2}, \dots, A_j^{2m-3}$ are symmetric.
 \end{itemize}
\end{proof}

These techniques allow us to computationally verify the following.

\begin{Proposition}\label{tr}
  For $j \in \{4,\dots, 15\}$, the Gram matrix $A_j$ of the linear map $\tau_{\mathcal{B}_j}$ of $H(G_j)$ is symmetric, i.e., $\tau_{\mathcal{B}_j}$ is a  trace function.
\end{Proposition}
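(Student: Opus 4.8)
The plan is to recognise that the symmetry of the Gram matrix $A_j$ is precisely the trace identity $\tau_{\mathcal{B}_j}(h_1h_2)=\tau_{\mathcal{B}_j}(h_2h_1)$, which is the content behind the ``i.e.'' in the statement. Indeed, $\tau_{\mathcal{B}_j}$ is $R(G_j)$-linear and the pairing $(h_1,h_2)\mapsto\tau_{\mathcal{B}_j}(h_1h_2)$ is $R(G_j)$-bilinear, so it suffices to verify this identity on pairs of basis elements; but that is exactly the assertion $A_j=A_j^{T}$. Thus the entire task reduces to establishing the symmetry of $A_j$.

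First I would invoke the Corollary preceding the statement, which, through the recursion of Lemma~\ref{gram}, reduces the problem to showing that the $m$ blocks $A_j^0,\dots,A_j^{m-1}$ are symmetric. This reduction is the crucial saving: once these blocks are known to be symmetric, the remaining blocks $A_j^m,\dots,A_j^{2m-2}$ are obtained for free from the coefficient matrix $Z$ and inherit symmetry automatically, so they never have to be computed or checked directly.

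Next I would compute the blocks $A_j^0,\dots,A_j^{m-1}$ explicitly. Each entry is $(A_j^k)_{i_1,i_2}=\tau_{\mathcal{B}_j}(z_j^{k}y_{i_1}y_{i_2})$, that is, the coefficient of $1_{H(G_j)}$ when the product $z_j^{k}y_{i_1}y_{i_2}$ is rewritten in the basis $\mathcal{B}_j$. I would carry out this rewriting via the faithful matrix representation read off from the completed coset table, exactly as in \cite[Proposition~2.5]{ChP}, which turns each such computation into matrix multiplication over $R(G_j)$ and can be fully automated. Because the set $X_j$ comes from coset representatives arranged along a spanning tree, many products $y_{i_1}y_{i_2}$ already lie in $\{y_1,\dots,y_l\}$ up to a small central power, so most entries vanish and these blocks are sparse and cheap to evaluate. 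Verifying $A_j^k=(A_j^k)^{T}$ for each $k\in\{0,\dots,m-1\}$ and each $j\in\{4,\dots,15\}$ then finishes the argument, the explicit calculations being performed by the programs at \cite{pr}.

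The main obstacle I anticipate is computational rather than conceptual: for the larger groups of the octahedral family (for instance $G_{11}$ and $G_{15}$, where $m$ and $l$ are both large and $R(G_j)$ carries several indeterminates) the basis expansions involve sizeable matrices whose entries are multivariate Laurent polynomials. Without the reduction above one would have to expand products $z_j^{k}y_{i_1}y_{i_2}$ with $k$ as large as $2m-2$, precisely where the coefficient growth is worst; the combination of the recursion in Lemma~\ref{gram} with the sparsity coming from the coset-table construction is exactly what keeps the verification feasible.
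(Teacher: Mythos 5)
Your proposal follows essentially the same route as the paper: reduce the trace property to symmetry of the Gram matrix, invoke the recursion of Lemma~\ref{gram} and its Corollary to cut the problem down to the blocks $A_j^0,\dots,A_j^{m-1}$, and verify symmetry of those blocks computationally via the faithful matrix representation obtained from the completed coset tables. This is precisely how the paper's computational verification proceeds, so there is nothing to add.
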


\subsubsection{The determinant} In order to prove that $\tau_{\mathcal{B}_j}$ is indeed a
symmetrising trace, we need to show that the determinant of
$A_j$, $j=4,\dots, 15$ is invertible in the definition ring $R(G_j)$.  \smallbreak\smallbreak
\noindent
	\textbf{Tetrahedral family.}
	We first set up some notation for the coefficients of the positive Hecke relations \ref{ph} for the maximal group $G_7$:
	\begin{itemize}
		\item $T_{\mathbf{s}}^2 = a_1 T_{\mathbf{s}} + a_0$
		\item $T_{\mathbf{t}}^3 = b_2 T_{\mathbf{t}}^2 + b_1 T_{\mathbf{s}}+ b_0$
		\item $T_{\mathbf{u}}^3 = c_2T_{\mathbf{u}}^2 + c_1T_{\mathbf{u}}  + c_0$
	\end{itemize}
	Recall that the coefficients $a_0$, $b_0$, and $c_0$ are invertible elements of $R(G_7)$.
	As we have already mentioned, for each $j=4,5,6$, there is a specialisation $\theta_j: R(G_7)\rightarrow R(G_j)$, such that the Hecke algebra $H(G_j)$ is isomorphic to a subalgebra of the specialised Hecke algebra $H^j(G_7)$. The specialisations $\theta_j$, $j=4,5,6$, are given explicitly in \cite[\S 4]{Ma2}. For the coefficients $a_i$, $b_i$, and $c_i$ in the positive Hecke relations we have:

	$$
	\begin{array}{l|c|c|c|c|c|c|c|c}
		\theta_j,\, j=4,5,6 & \theta_j(a_0) & \theta_j(a_1) &  \theta_j(b_0) & \theta_j(b_1) & \theta_j(b_2) &\theta_j(c_0) & \theta_j(c_1) & \theta_j(c_2)\\
		\hline
		\theta_4  & 1 & 0 &  1& 0 & 0 &c_0 & c_1 & c_2\\
		\theta_5  & 1 & 0 &  b_0& b_1 & b_2 &c_0 & c_1 & c_2\\
		\theta_6  & a_0 & a_1 &  1& 0 & 0 &c_0 & c_1 & c_2\\
			\end{array}$$
		\\
		The determinants $\det A_j$, $j=4,5,6,7$ are given in the following table:

		\begin{align*}
			\setlength\extrarowheight{2.5pt}
			\begin{array}{l|l}
				G_j & \det A_j \\ \hline
				G_7  & a_0^{936} b_0^{528} c_0^{672}\\
				G_6  & a_0^{264} c_0^{192}\\
				G_5  & -b_0^{264} c_0^{336}\\
				G_4  & -c_0^{96}
			\end{array}
		\end{align*}

\smallbreak \noindent
	\textbf{Octahedral family.}
	There is an analogue for the octahedral family. We start again with the positive Hecke relations of the maximal group $G_{11}$:
	\begin{itemize}
		\item $s^2 = a_1 s + a_0$
		\item $t^3 = b_2 t^2 + b_1 t + b_0$
		\item $u^4 = c_3 u^3 + c_2 u^2 + c_1 u + c_0$
	\end{itemize}
	The determinants $\det A_j$, $j=8,\dots, 15$ are given in the following table:
	\begin{align*}
		\setlength\extrarowheight{2.5pt}
		\begin{array}{l|l}
			G_j & \det A_j \\ \hline
			G_{11}  & a_0^{7296} b_0^{4416} c_0^{4032}\\
			G_9 & a_0^{2240} c_0^{1248}\\
			G_{10}  & b_0^{2208} c_0^{2016}\\
			G_{15}& a_0^{3648} b_0^{2208} c_0^{2016}\\
			G_8  & c_0^{624}\\
			G_{13} & a_0^{1120} c_0^{592} \\
			G_{14}  & -a_0^{1692} b_0^{1200} \\
			G_{12}  & -a_0^{576}
		\end{array}
	\end{align*}
	Therefore, we have proven the following:

	\begin{Proposition}\label{tr2}
		For $j \in \{4,\dots, 15\}$ the trace function $\tau_{\mathcal{B}_j}$ is a symmetrising  trace.
	\end{Proposition}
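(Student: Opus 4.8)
The plan is to prove the two defining conditions of a symmetrising trace separately, observing that one of them is already in hand. The symmetry condition $\tau_{\mathcal{B}_j}(h_1 h_2) = \tau_{\mathcal{B}_j}(h_2 h_1)$ for all $h_1, h_2 \in H(G_j)$ is precisely the statement that the Gram matrix $A_j$ is symmetric, and this is supplied by Proposition~\ref{tr}. Consequently the whole content of the proposition reduces to establishing non-degeneracy of the bilinear form $(h_1,h_2)\mapsto\tau_{\mathcal{B}_j}(h_1 h_2)$. Since $\mathcal{B}_j$ is a basis of the free $R(G_j)$-module $H(G_j)$, non-degeneracy is equivalent to the assertion that $\det A_j$ is a unit in the Laurent polynomial ring $R(G_j)$, so this is the single point that remains to be verified.

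To reach $\det A_j$ I would first assemble the full Gram matrix from its block decomposition. The low blocks $A_j^0,\dots,A_j^{m-1}$ are computed directly: because the construction of $\mathcal{B}_j$ is arranged so that many products $y_{i_1}y_{i_2}$ of tree-elements are themselves among the $y_i$, these blocks are sparse and their entries can be read off from the completed coset table. The remaining blocks $A_j^m,\dots,A_j^{2m-2}$ are then obtained by the recursion of Lemma~\ref{gram}, using the transition matrix $Z$ that expresses each $z_j^m y_i$ in the basis $\mathcal{B}_j$. All products inside $H(G_j)$ needed here are evaluated symbolically over $R(G_j)$ by means of the faithful matrix representation coming from the coset table, exactly as in \cite[Proposition~2.5]{ChP}, so the computation can be fully automated.

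With the matrix built, the determinant is computed symbolically, and the decisive structural fact — the payoff of the entire factorisation strategy — is that for every $j$ the result is, up to sign, a monomial in the constant coefficients $a_0$, $b_0$, $c_0$ of the positive Hecke relations, as recorded in the two determinant tables. To finish I would invoke that $a_0$, $b_0$, $c_0$ are units of $R(G_j)$: each is, up to sign, the top elementary symmetric polynomial $f_{e_g}$ in the indeterminates $u_{s,1},\dots,u_{s,e_s}$ attached to the corresponding generator, hence equals $\pm$ a product of these indeterminates and is therefore invertible in the Laurent polynomial ring. Thus each $\det A_j$, being $\pm$ a product of powers of units, is itself a unit, the form is non-degenerate, and together with the symmetry from Proposition~\ref{tr} this shows that $\tau_{\mathcal{B}_j}$ is a symmetrising trace.

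The hard part will be the actual evaluation of the determinant rather than the final unit argument, which is immediate once the monomial form is known. The matrices $A_j$ are large — for $G_{11}$ the exponents appearing in the determinant already indicate a matrix with several hundred rows — and carry multivariate Laurent-polynomial entries, so a naive symbolic determinant is infeasible. What makes the computation tractable is the recursion of Lemma~\ref{gram} combined with the sparsity of the low blocks; for the non-maximal groups one can further exploit the embedding of $H(G_j)$ into a specialisation of the maximal-group algebra (via the maps $\theta_j$ of equations \eqref{sp}--\eqref{sp2}) to cut the work down to controlled variants of the maximal-group calculation.
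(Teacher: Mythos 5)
Your proposal is correct and follows essentially the same route as the paper: symmetry is quoted from Proposition~\ref{tr}, and non-degeneracy is reduced to the computational fact that each $\det A_j$ is, up to sign, a monomial in the constant coefficients $a_0$, $b_0$, $c_0$ of the positive Hecke relations, which are units in the Laurent polynomial ring $R(G_j)$. Even your closing remark about exploiting the specialisations $\theta_j$ and the maximal-group algebras matches the paper's alternative argument (the block-form lemma following Proposition~\ref{tr2}).
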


There is an alternative way proving that $\det A_j$, $j=4,\dots, 15$ is invertible, without calculating all determinants explicitly:

\begin{lem}\phantom{AA}
	\begin{enumerate}
	\item Let $A_{7}^j$, $j=4,5,6$ be the Gram matrix, which corresponds to the specialised Hecke algebra $H^j(G_7)$. Then, we have the following block form:
	\[
	A_7^j= \left[\begin{array}{ c  c cc}
		A_j & &&\\

		 &&& D_2\\
			&&\iddots&\\
			& D_{\ell_j}&&
	\end{array}\right]
	\]
	Therefore, if $\det A_7^j$ is invertible in $R(G_j)$, the same stands for $\det A_j$.
	\item Let $A_{11}^j$, $j=8,9,10,12,13,14,15$ be the Gram matrix, which corresponds to the specialised Hecke algebra $H^j(G_{11})$. Then, we have the following block form:
	\[
	A_{11}^j= \left[\begin{array}{ c  c cc}
		A_j & &&\\

		 &&& D_2\\
			&&\iddots&\\
			& D_{\ell_j}&&
	\end{array}\right]
	\]
	Therefore, if $\det A_{11}^j$ is invertible in $R(G_j)$, the same stands for $\det A_j$.
		\end{enumerate}
\end{lem}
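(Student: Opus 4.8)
The idea is to read the block pattern of $A_7^j$ off the module structure in \eqref{sp}, \eqref{sp2}, and then to factor its determinant. By \eqref{sp} the specialised algebra $H^j(G_7)$ is free of rank $\ell_j$ over $H(G_j)$ with basis $1,T_{\mathbf{z}},\dots,T_{\mathbf{z}}^{\ell_j-1}$, because $T_{\mathbf{z}}^{\ell_j}=T_{\mathbf{z_j}}$ already lies in $H(G_j)$; equivalently $H^j(G_7)=\bigoplus_{i=0}^{\ell_j-1}T_{\mathbf{z}}^{i}H(G_j)$. Correspondingly, \eqref{sp2} expresses the $z$-basis as $\mathcal{B}_7=\bigsqcup_{i=0}^{\ell_j-1}T_{\mathbf{z}}^{i}\mathcal{B}_j$, so I would index the rows and columns of $A_7^j$ by pairs $(i,b)$ with $i\in\{0,\dots,\ell_j-1\}$ and $b\in\mathcal{B}_j$, grouping them into $\ell_j$ blocks according to $i$.

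First I would relate the two traces. Let $P_0\colon H^j(G_7)\to H(G_j)$ be the $H(G_j)$-linear projection onto the summand $i=0$. Since $1_{H(G_7)}$ lies in this summand, evaluating on the basis $\mathcal{B}_7$ gives, for $b\in\mathcal{B}_j$, the equality $\tau_{\mathcal{B}_7}(T_{\mathbf{z}}^{i}b)=\delta_{i,0}\delta_{b,1}=\tau_{\mathcal{B}_j}(P_0(T_{\mathbf{z}}^{i}b))$; as both sides are $R(G_j)$-linear, $\tau_{\mathcal{B}_7}=\tau_{\mathcal{B}_j}\circ P_0$. Here $\tau_{\mathcal{B}_7}$ is still the ``coefficient of $1$'' functional after specialising along $\theta_j$, since $\mathcal{B}_7$ specialises to a basis of $H^j(G_7)$, and it is a trace by Proposition~\ref{tr}.

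Next I would compute the $(i_1,i_2)$-block. Because $T_{\mathbf{z}}$ is central, its entries are $\tau_{\mathcal{B}_7}(T_{\mathbf{z}}^{i_1}b\,T_{\mathbf{z}}^{i_2}b')=\tau_{\mathcal{B}_j}\big(P_0(T_{\mathbf{z}}^{i_1+i_2}bb')\big)$, which depends only on $i_1+i_2\in\{0,\dots,2\ell_j-2\}$. Writing $i_1+i_2=q\ell_j+r$ with $q\in\{0,1\}$ and $0\le r\le\ell_j-1$, and using that $T_{\mathbf{z}}^{\ell_j}=T_{\mathbf{z_j}}$ is a central unit of $H(G_j)$, we get $T_{\mathbf{z}}^{i_1+i_2}bb'=T_{\mathbf{z}}^{r}\big(T_{\mathbf{z_j}}^{q}bb'\big)$ with $T_{\mathbf{z_j}}^{q}bb'\in H(G_j)$; hence this element lies in the $r$-th summand and is annihilated by $P_0$ unless $r=0$, i.e.\ unless $i_1+i_2\in\{0,\ell_j\}$. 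Thus the block at $(0,0)$ equals $A_j$, every block on the anti-diagonal $i_1+i_2=\ell_j$ (the positions carrying $D_2,\dots,D_{\ell_j}$) equals the single matrix $D:=(\tau_{\mathcal{B}_j}(T_{\mathbf{z_j}}bb'))_{b,b'\in\mathcal{B}_j}$, and all other blocks vanish. This is precisely the asserted block form.

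Finally, since the first block-row and block-column meet only in the corner entry $A_j$, the matrix is block-diagonal with summands $A_j$ and the $(\ell_j-1)\times(\ell_j-1)$ block-anti-diagonal matrix whose nonzero blocks all equal $D$; expanding gives $\det A_7^j=\pm\det(A_j)\,(\det D)^{\ell_j-1}$. Consequently, if $\det A_7^j$ is a unit of $R(G_j)$ then each factor is a unit, in particular $\det A_j$, which proves the claim; the octahedral case is identical with $G_{11}$ in place of $G_7$. The one genuinely delicate point is the bookkeeping behind $\tau_{\mathcal{B}_7}=\tau_{\mathcal{B}_j}\circ P_0$ together with the reduction of $T_{\mathbf{z}}^{i_1+i_2}$ modulo $T_{\mathbf{z}}^{\ell_j}=T_{\mathbf{z_j}}$; once $\mathcal{B}_7$ is organised as $\bigsqcup_i T_{\mathbf{z}}^{i}\mathcal{B}_j$ and the centrality of $T_{\mathbf{z}}$ is invoked, the vanishing of all but two block-diagonals is immediate and the determinant factorisation is routine linear algebra.
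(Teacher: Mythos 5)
Your proof is correct and takes essentially the same approach as the paper: decompose the specialised basis as the union of the sets $T_{\mathbf{z}}^{\alpha}\mathcal{B}_j$, $0\le\alpha<\ell_j$, using \eqref{sp} and \eqref{sp2}, use centrality of $T_{\mathbf{z}}$ to write products of basis elements as $T_{\mathbf{z}}^{\alpha+\alpha'}bb'$, and observe that the trace kills such a product unless $\alpha+\alpha'$ is a multiple of $\ell_j$, which gives the block form and the determinant conclusion. Your write-up is merely more detailed than the paper's sketch (making explicit the identity $\tau_{\mathcal{B}_7}=\tau_{\mathcal{B}_j}\circ P_0$, the equality of the anti-diagonal blocks, and the factorisation $\det A_7^j=\pm\det(A_j)\,(\det D)^{\ell_j-1}$), but the underlying argument is the same.
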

\begin{proof} We prove (1), i.e. we prove the claim for the tetrahedral family. The proof for the octahedral family is similar, if we replace $G_7$ with $G_{11}$.

$H^j(G_7)$ admits a basis $B_7$, coming from the $z$-basis $\mathcal{B}_7$ with extension of scalars. From \eqref{sp} we have that any element in $\mathcal{B}_7$  has the form $T_{\mathbf{z}}^{\alpha} b$ for some
	$b\in B_j$, and $0 \leq \alpha < \ell_j$.   Multiplying any two gives

	$$T_{\mathbf{z}}^{\alpha}b\cdot T_{\mathbf{z}}^{\alpha'}b'  = T_{\mathbf{z}}^{\alpha+\alpha'}bb'.$$ From \eqref{sp2} we have, then that $T_{\mathbf{z}}^{\alpha}b\cdot T_{\mathbf{z}}^{\alpha'}b'$
	is $0$ under $\tau_{\mathcal{B}_j}$, unless  $\alpha+\alpha'$ is a multiple of $\ell_j$.
		\end{proof}

	\subsection{Conclusion}
	In this section we summarize our results and we present the main theorem of this paper.

	We consider the exceptional groups of the tetrahedral and octahedral family, i.e. the groups $G_j$, $j=4,\dots, 15$. For each group we have constructed a $z_j$-basis $\mathcal{B}_j$ and we have defined the linear form $\tau_{\mathcal{B}_j}: H(G_j)\rightarrow R(G_j)$, as $1_{H(G_j)}\mapsto 1$ and $b\mapsto 0$, for $b\not =1_{H(G_j)}$.

	Our goal is to prove that $\tau_{\mathcal{B}_j}$ is the canonical symmetrising  trace on $H(G_j)$ of the BMM symmetrising trace conjecture \ref{BMM sym}.

	 In the previous section we have seen that $\tau_{\mathcal{B}_j}$ satisfies Condition (1) of Conjecture \ref{BMM sym} (Propositions \ref{tr} and \ref{tr2}). From Remark \ref{remex} we have that $\mathcal{B}_j$ satisfies the lifting conjecture \ref{lift} and, hence,
	  Condition (2) is satisfied and Condition (3) is equivalent to \ref{extra2}.
We check this condition by using the closed coset table:

Let $B_j=\{z_j^ky_i\,:\, k=0,\dots, m-1,\, i=1,\dots, l\}$ the $z_j$-basis of $H(G_j)$. One needs to prove that $\tau_{\mathcal{B}_j}(z_j^{m-k}y_i^{-1})=0$, for all $(k,i)\not=(0,1)$. Using the matrix representation coming from the closed coset table we can write each element $z_j^{m-k}y_i^{-1}$ as a linear combination of elements of $\mathcal{B}_j$, with the coefficient of $1_{H(G_j)}$ being its image under $\tau_{\mathcal{B}_j}$. We verify that this coefficient is always 0 and, therefore, we  obtain the following:

	  \begin{Theorem}
	  	The BMM symmetrising trace conjecture \ref{BMM sym} holds for the Hecke algebras associated to the groups of the tetrahedral and octahedral families.
	  \end{Theorem}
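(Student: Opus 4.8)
The plan is to verify, for each $j \in \{4, \dots, 15\}$, that the linear form $\tau_{\mathcal{B}_j}$ attached to the $z_j$-basis $\mathcal{B}_j$ satisfies all three conditions of Conjecture \ref{BMM sym}, and is therefore the canonical symmetrising trace on $H(G_j)$. I would assemble the pieces already in place and then supply the one remaining verification.

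Condition $(1)$ is exactly the content of the two preceding propositions. Proposition \ref{tr} shows that the Gram matrix $A_j$ of $\tau_{\mathcal{B}_j}$ is symmetric, so $\tau_{\mathcal{B}_j}$ is a trace function, and Proposition \ref{tr2} shows that $\det A_j$ is a unit in $R(G_j)$, so the bilinear form $(h_1,h_2)\mapsto \tau_{\mathcal{B}_j}(h_1 h_2)$ is non-degenerate. Hence $\tau_{\mathcal{B}_j}$ is a symmetrising trace.

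For Conditions $(2)$ and $(3)$ I would exploit that $\mathcal{B}_j$ is a $z_j$-basis. By Remark \ref{remex} this yields the validity of the lifting conjecture \ref{lift}, so Condition $(2)$ holds at once, since by construction $\tau_{\mathcal{B}_j}(T_{\boldsymbol{w}}) = \delta_{1\boldsymbol{w}}$ for the lifted elements. Because these lifted elements are precisely the basis $\mathcal{B}_j$, the hypotheses of \cite[Proposition 2.7]{MM10} are met, and Condition $(3)$ becomes equivalent to the single relation \eqref{extra2}, namely $\tau_{\mathcal{B}_j}(z_j^{m-k} y_i^{-1}) = 0$ for all $(k,i) \neq (0,1)$.

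The decisive step is then the verification of \eqref{extra2}, and I expect it to be the main obstacle. Here I would use the faithful matrix representation extracted from the closed coset table: each element $z_j^{m-k} y_i^{-1}$ is rewritten as an $R(G_j)$-linear combination of $\mathcal{B}_j$, and the coefficient of $1_{H(G_j)}$ in that expansion is exactly $\tau_{\mathcal{B}_j}(z_j^{m-k} y_i^{-1})$. Confirming that this coefficient vanishes for every admissible pair $(k,i)$ completes the proof for all twelve groups. The difficulty is concentrated in this computation, since handling the inverses $y_i^{-1}$ and the central powers $z_j^{m-k}$ requires the coset tables to be complete and the Hecke reductions to be carried out correctly; once the representation is available, however, the check reduces to automated linear algebra over $R(G_j)$.
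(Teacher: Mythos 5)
Your proposal is correct and follows essentially the same route as the paper: Condition (1) via Propositions \ref{tr} and \ref{tr2}, Conditions (2) and (3) via Remark \ref{remex} and the equivalence of Condition (3) with \eqref{extra2} from \cite[Proposition 2.7]{MM10}, and finally the automated verification of \eqref{extra2} through the matrix representation arising from the closed coset tables. The paper's own conclusion section carries out exactly these steps, including the computational check that the coefficient of $1_{H(G_j)}$ in the expansion of each $z_j^{m-k}y_i^{-1}$ vanishes.
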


	\end{document}